\newtheorem{theorem}{Theorem}[section]
\newtheorem{proposition}[theorem]{Proposition}
\newtheorem{lemma}[theorem]{Lemma}
\theoremstyle{definition}
\newtheorem{definition}[theorem]{Definition}
\newcommand{\CC}{{\mathbb C}} 
\newcommand{\NN}{{\mathbb N}} 
\newcommand{\ZZ}{{\mathbb Z}} 
\newcommand{\DD}{{\mathbb D}} 
\newcommand{\RR}{{\mathbb R}} 
\newcommand{\FF}{{\mathbb F}} 
\newcommand{\cB}{{\mathcal B}}
\newcommand{\cD}{{\mathcal D}}
\newcommand{\cE}{{\mathcal E}}
\newcommand{\cF}{{\mathcal F}}
\newcommand{\cH}{{\mathcal H}}
\newcommand{\cM}{{\mathcal M}}
\newcommand{\cX}{{\mathcal X}}
\newcommand{\iac}{\mathrm{i}} 
\newcommand{\de}{\mathop{}\!\mathrm{d}} 
\newcommand{\emath}{\mathrm{e}} 
\newcommand{\ra}{\rightarrow}
\newcommand{\ol}{\overline}
\renewcommand{\Re}{\mathrm{Re}}
\newcommand{\Span}{\operatorname{span}}
\providecommand{\AMS}{$\mathcal{A}$\kern-.1667em%
\lower.25em\hbox{$\mathcal{M}$}\kern-.125em$\mathcal{S}$}
\begin{document}
\title[Approximation of Functions in Reproducing Kernel Hilbert Spaces]{Probability Error Bounds for 
Approximation of Functions in Reproducing Kernel Hilbert Spaces} 

\author{Ata Den\.iz Ayd\i n}

\address{Department of Mathematics and Department of Computer Engineering, Bilkent University, 06800 Bilkent, Ankara,
  Turkey, \emph{current address:} ETH Z\"urich,
Department of Mathematics, R\"amistra\ss e 101,
8092 Z\"urich,
Switzerland}

\email{ata.aydin@alumni.bilkent.edu.tr}

\author{Aurelian Gheondea}

\address{Department of Mathematics, Bilkent University, 06800 Bilkent, Ankara,
Turkey \emph{and} Institutul de Matematic\u a al Academiei Rom\^ane, Calea Grivi\c tei 21,  010702 
Bucure\c sti, Rom\^ania} 
\email{aurelian@fen.bilkent.edu.tr \textrm{and} A.Gheondea@imar.ro}

\begin{abstract} We find probability error bounds for approximations of functions $f$ 
in a separable reproducing kernel Hilbert space $\cH$ with reproducing kernel $K$ on a base space $X$, 
firstly in terms of finite linear combinations of functions of type $K_{x_i}$ and then in terms of the projection
$\pi^n_x$ on $\Span\{K_{x_i} \}^n_{i=1}$, for random sequences of points $x=(x_i)_i$ in $X$.
Given a probability measure $P$, letting $P_K$ be the measure defined
by $\de P_K(x)=K(x,x)\de P(x)$, $x\in X$, our approach is based on the nonexpansive operator
\begin{equation*}
L^2(X;P_K)\ni \lambda\mapsto L_{P,K}\lambda:=\int_X \lambda(x)K_x\de P(x)\in \cH,
\end{equation*}  
where the integral exists in the Bochner sense. Using this operator, we then define a new reproducing kernel
Hilbert space, denoted by $\cH_P$, that is the operator range of $L_{P,K}$. 
Our main result establishes bounds, in terms of the operator $L_{P,K}$, 
on the probability that the Hilbert space distance between an arbitrary function $f$ in $\cH$ and 
linear combinations of functions of type $K_{x_i}$, for $(x_i)_i$ sampled independently from $P$, falls below a 
given threshold.
For sequences of points $(x_i)_{i=1}^\infty$ constituting a 
so-called uniqueness set, the orthogonal projections $\pi^n_x$ to 
$\Span\{K_{x_i} \}^n_{i=1}$ converge in the strong operator topology to the identity operator.
We prove that, under the assumption that $\cH_P$ is dense in $\cH$, any 
sequence of points sampled independently from $P$ yields a uniqueness set with probability $1$.
This result improves on previous error bounds in weaker norms, such as uniform or $L^p$ norms, which yield 
only convergence in probability and not almost certain convergence.
Two examples that show the applicability of this result to a uniform distribution on a compact interval and to
the Hardy space $H^2(\DD)$ are presented as well.
\end{abstract}
\subjclass[2010]{Primary 41A45; Secondary 41A65, 46E22, 68T05.}
\keywords{Reproducing kernel Hilbert space, error approximation bounds, 
almost certain convergence, uniqueness set, Bochner integral, Markov-Bienaym\'e-Chebyshev inequality}
\maketitle

\section{Introduction}

Several machine learning algorithms that use positive semidefinite kernels, such as support vector 
machines (SVM), have 
been analysed and justified rigorously using the theory of reproducing kernel Hilbert spaces (RKHS), 
yielding statements of optimality, convergence and $L^p$ approximation bounds, e.g.\ see 
F.~Cucker and S.~Smale \cite{smale}. Reproducing kernel 
Hilbert spaces are Hilbert spaces of functions associated to a suitable kernel such that convergence with 
respect to the Hilbert space norm implies pointwise convergence, and in the context of approximation possess 
various favourable properties resulting from the Hilbert space structure. For example, under certain conditions 
on the kernel, every function in the Hilbert space is sufficiently differentiable and differentiation is in fact a 
nonexpansive linear map with respect to the Hilbert space norm, e.g.\ see 
\cite[Subsection 2.1.3]{saitoh}.

In order to substantiate the motivation for our investigation, we briefly review 
previously obtained bounds on the approximation of functions as linear combinations of kernels 
evaluated at finitely many points.
The theory of V.N.~Vapnik and A.Ya.~Chervonenkis of statistical learning theory 
\cite{vapnik}, \cite{vapnik1}, \cite{vapnikchervonenkis}, 
relies on concentration inequalities such as 
Hoeffding's inequality to bound the supremum distance between expected and empirical risk. The theory 
considers a data space $X \subseteq \RR^d$ on which an unknown probability distribution $P$ is 
defined, a hypothesis set $\cH$ and a loss function $V \colon \cH \times X \to \RR_+$, such 
that one wishes to find a hypothesis $h \in \cH$ that minimizes the expected risk 
\begin{equation*}R[h] := \int_X V(h, x) \de P(x).\end{equation*} 
Since $P$ is not known in general, instead of minimizing the expected risk one usually minimizes the 
empirical risk
\begin{equation*}\widehat R_S[h] = \frac{1}{n} \sum_{i=1}^n V(h, x_i)\end{equation*}
over a finite set $S = \{x_i\}_{i=1}^n \subseteq X$ of samples. Vapnik-Chervonenkis theory measures the 
probability with which the maximum distance between $R$ and $\widehat R$ falls below a given threshold.
Recall that the Vapnik-Chervonenkis (VC) dimension of $\cH$ with respect to $V$ is the maximum 
cardinality of finite subsets $Y \subseteq X$ that can be shattered by $\cH$, i.e.\ for each 
$Y^\prime \subseteq Y$, there exist $h \in \cH$ and $\alpha \in \RR$ such that
\begin{align*}
Y^\prime & = \left\{x \in Y \mid V(h, x) \geq \alpha \right\}; \\
Y \setminus Y^\prime & = \left\{x \in Y \mid V(h, x) < \alpha \right\}.
\end{align*}
Thus, they prove that, 
assuming that $A \leq V(h, x) \leq B$ for each $h \in \cH, x \in X$ and the VC dimension of $\cH$ is 
$d < \infty$, then, for any $\eta \in (0,1)$, 
\begin{equation*}
P\biggl(\sup_{h \in \cH} \left|R[h] - \widehat R_S[h]\right| \geq (B-A) \sqrt{\frac{d \log \frac{2en}{d} 
- \log\frac{\eta}{4}}{n}}\biggr) \leq \eta.
\end{equation*}

F.~Girosi, see \cite{girosi1} and \cite[Proposition 2]{sltapprox}, has used this general result to bound the 
uniform distance between 
integrals $\int J(x,y) \lambda(y) \de y$ and sums of the form $\frac{1}{n} \sum_{i=1}^n J(x, x_i)$, by 
reinterpreting $\cH$ as $\RR^d$, $V$ as $J$ and $\de P(y)$ as $\frac{|\lambda(y)|}{\|\lambda\|_{L^1}} \de y$.
M.A.~Kon and L.A.~Raphael \cite{sltapprox} then applied this methodology to obtain uniform 
approximation bounds of 
functions in reproducing kernel Hilbert spaces. They consider two cases where the Hilbert space is dense in 
$L^2(\RR^d)$ with a stronger norm \cite[Theorem 4]{sltapprox}, and where it is a closed subspace with the 
same norm \cite[Theorem 5]{sltapprox}. Also, M.A.~Kon, L.A.~Raphael, and 
D.A.~Williams \cite{kon} extended Girosi's approximation estimates for functions in Sobolev spaces.
While these bounds guarantee uniform convergence in probability, the approximating functions are neither
orthogonal projections of $f$ nor necessarily elements of a reproducing kernel Hilbert space, and hence may 
not capture $f$ exactly at $(x_i)_{i=1}^n$ nor converge monotonically. Furthermore, the fact that the norm is 
not a RKHS norm means that derivatives of $f$ may not be approximated in general, since differentiation is 
not bounded with respect to the uniform norm, unlike the RKHS norm associated to a continuously differentiable kernel.

The purpose of this article is thus to establish sufficient
conditions for convergence and approximation in the reproducing kernel Hilbert space norm.
In Section~\ref{s:mr},
we find probability error bounds for approximations of functions $f$ 
in a separable reproducing kernel Hilbert space $\cH$ with reproducing kernel $K$ on a base space $X$, 
firstly in terms of finite linear combinations of functions of type $K_x$ and then in terms of the projection
$\pi^n_x$ onto $\Span\{K_{x_i} \}^n_{i=1}$, for random 
sequences of points $x=(x_i)_i$ in the base space $X$. 
Given a probability measure $P$, letting $P_K$ be the measure defined
by $\de P_K(x)=K(x,x)\de P(x)$, $x\in X$, we approach these problems by firstly showing the existence of
the nonexpansive operator
\begin{equation}\label{e:lpk}
L^2(X;P_K)\ni \lambda\mapsto L_{P,K}\lambda:=\int_X \lambda(x)K_x\de P(x)\in \cH,
\end{equation}  
where the integral exists in the Bochner sense. Using this operator, we then define a new reproducing kernel
Hilbert space, denoted by $\cH_P$, that is the operator range of $L_{P,K}$.
Our main result establishes bounds, in terms of the operator $L_{P,K}$, 
on the probability that the Hilbert space distance between an arbitrary function $f$ in $\cH$ and 
linear combinations of functions of type $K_{x_i}$, for $(x_i)_i$ sampled independently from $P$, falls below a 
given threshold, see Theorem~\ref{thmbound}.
For sequences of points $(x_i)_{i=1}^\infty$ constituting a 
so-called \emph{uniqueness set}, see Subsection~\ref{ss:usaccp}, the orthogonal projections $\pi^n_x$ to 
$\Span\{K_{x_i} \}^n_{i=1}$ converge in the strong operator topology to the identity operator.
As an application of our main result, we show that, under the assumption that $\cH_P$ is dense in $\cH$, any 
sequence of points sampled independently from $P$ yields a uniqueness set with probability $1$.

The results obtained in this article improve on the results obtained by Kon and Raphael 
in several senses: 
the convergence of approximations is in the RKHS norm, which is stronger than the uniform norm whenever 
the kernel is bounded; 
the type of convergence with respect to the points $(x_i)_i$ is strengthened from convergence in probability to 
almost certain convergence; 
and the separability of $\cH$ then allows the result to be extended from the approximation of a single function 
to the simultaneous approximation of all functions in the Hilbert space. In addition, 
when compared to the existing
methods for this kind of problems, our approach based on the operator $L_{P,K}$ defined at \eqref{e:lpk},
that encodes the interplay
between the kernel $K$ and the probability measure $P$, and the associated RKHS $\cH_P$, is 
completely new and has the potential to overcome many difficulties.

These results are confined to the special
case of a separable RKHS $\cH$ of functions on an arbitrary set $X$, due to several reasons, one of them 
being the fact that the Bochner integral 
is requiring the assumption of separability, but we do not see this as a loss of generality 
since most of the spaces of interest for applications are separable. 
In the last section we present two examples that point out the applicability, and the limitations of our results
as well, the first to the uniform probability distribution on the compact interval $[-\pi,\pi]$, together with a class 
of bounded continuous kernels,
and the second to the Hardy space $H^2(\DD)$ corresponding to the Szeg\"o kernel which is unbounded.
In each case we can explicitly calculate the space $\cH_P$, its reproducing kernel $K_P$, and 
the operator $L_{P,K}$.

\section{Notation and Preliminary Results}\label{s:npr}

\subsection{Reproducing Kernel Hilbert Spaces}\label{ss:rkhs}
In this subsection, we briefly review some concepts and facts on reproducing kernel Hilbert spaces, 
following classical texts such as 
N.~Aronszajn \cite{aron1}, \cite{aron} and L.~Schwartz \cite{schwartz}, or more modern ones such as S.~Saitoh and 
Y.~Sawano \cite[Chapter 2]{saitoh} and V.I.~Paulsen and M.~Raghupathi \cite{paulsen} .

Throughout this article we denote by $\FF$ one of the commutative fields $\RR$ or $\CC$. 
For a nonempty set $X$ let 
$\FF^X$ denote the set of $\FF$-valued functions on $X$, forming an $\FF$-vector space under pointwise addition and scalar multiplication. 
 For each $p \in X$, the \emph{evaluation map} at $p$ is the linear functional
 \begin{equation*}
 \textup{ev}_p \colon \FF^X \to \FF; \ f \mapsto f(p).
 \end{equation*}
The evaluation maps equip $\FF^X$ with the locally convex topology of pointwise convergence, which is the 
weakest topology on $\FF^X$ that renders each evaluation map continuous. Under this topology, a 
generalized sequence in $\FF^X$ converges if and only if it converges pointwise, i.e.\ its image under each 
evaluation map converges. Since each evaluation map is linear and hence the vector space operations are 
continuous, this renders $\FF^X$ into a complete Hausdorff locally convex space. With respect to this 
topology, if $\cX$ is a topological space, a map $\phi \colon \cX \to \FF^X$ is continuous if and only if 
$\textup{ev}_p \circ \phi \colon \cX \to \FF$ is continuous for all $p \in X$.

We are interested in Hilbert spaces $\cH \subseteq \FF^X$ with topologies at least as strong as the
topology of pointwise convergence of
 $\FF^X$, so that the convergence of a sequence of functions in $\cH$ implies that the functions also converge 
pointwise. When $X$ is a finite set, $\FF^X \cong \FF^{d}$, where $d$ is the number of elements of $X$, 
can itself be made into a Hilbert space 
with a canonical inner product $\langle f, g \rangle := \sum_{p \in X} f(p) \overline{g(p)}$, or in general by an
inner product induced by a positive semidefinite $d \times d$ matrix. This leads to the concept of 
reproducing kernel Hilbert spaces.

Recalling the F.~Riesz's Theorem of representations of bounded linear functionals on Hilbert spaces,
if each $\textup{ev}_p : \cH \to \FF$ restricted to $\cH \subseteq \FF^X$ is continuous, for each $p \in X$, then 
there exists a unique vector $K_p \in \cH$ such that $\textup{ev}_p = \langle \cdot, K_p \rangle$. 
But, since each vector in $\cH$ is itself a function $X \to \FF$, 
these vectors altogether define a map $K \colon X \times X \to \FF$, $K(p, q) := K_q(p)$. 
Also, recall that a map $K\colon X\times X\ra \FF$ is usually called a kernel.

\begin{definition}
Let $\cH \subseteq \FF^X$ be a Hilbert space, $K \colon X \times X \to \FF$ a kernel. For each $p \in X$ 
define $K_p := K(\cdot, p) \in \FF^X$. $K$ is said to be a \emph{reproducing kernel} for $\cH$, and $\cH$ is 
then said to be a \emph{reproducing kernel Hilbert space} (RKHS), if, for each $p \in X$, we have
  \begin{itemize}
  \item[(i)] $K_p \in \cH$;
  \item[(ii)] $\textup{ev}_p =  \langle \cdot, K_p \rangle$, that is, for every $f \in \cH$ we have $f(p) =  \langle f, K_p \rangle$.
  \end{itemize}
  The second property is referred to as the \emph{reproducing property} of the kernel $K$.
\end{definition}

We may then summarize the last few paragraphs with the following characterization:
  Let $\cH \subseteq \FF^X$ be a Hilbert space. The following assertions are equivalent:
  \begin{itemize}
  \item[(i)] The canonical injection $i_\cH \colon \cH \to \FF^X$ is continuous.
  \item[(ii)] For each $p \in X$, the map $\textup{ev}_p \colon \cH \to \FF$ is continuous.
  \item[(iii)] $\cH$ admits a reproducing kernel.
  \end{itemize}

In that case, the reproducing kernel admitted by the Hilbert space is unique, by the uniqueness of the 
Riesz representatives $K_p$ of the evaluation maps.
We may further apply the reproducing property to each $K_q$ to
obtain that $K(p,q) = \langle K_q, K_p \rangle$ for each $p, q \in X$, yielding the following properties:
\begin{itemize}
  \item[(i)] For each $p \in X$, $K(p, p) = \|K_p\|^2 \geq 0$.
  \item[(ii)] For each $p, q \in X$, $K(q, p) = \overline{K(p, q)}$ and 
  \begin{equation}\label{e:schwarz}|K(p, q)|^2 \leq K(p,p) K(q,q).\end{equation}
  \item[(iii)] For each $n \in \NN$, $(c_i)_{i=1}^n \in \FF^n$, $(p_i)_{i=1}^n \in X^n$, 
  \begin{equation*}
  \sum_{i=1}^n \sum_{j=1}^n \overline{c_i} c_j K(p_i, p_j) = \|\sum_{i=1}^n c_i K_{p_i}\|^2 \geq 0.
  \end{equation*}
\end{itemize}
The property in \eqref{e:schwarz} is the analogue of the Schwarz Inequality. 
As a consequence of it, if $K(p,p)=0$ for some $p\in X$ then $K(p,q)=K(q,p)=0$ for all $q\in X$.

For any $K \colon X \times X \to \FF$, each $K_p \in \FF^X$ so we may define the subspace 
\begin{equation*}
\widetilde \cH_K := \textup{span} \left\{K_p \mid p \in X\right\}
\end{equation*} of $\FF^X$. If $K$ is the reproducing kernel of a Hilbert space $\cH$, $\widetilde \cH_K$ is 
also a subspace of $\cH$ and \begin{equation*}
\widetilde \cH_K^\perp = \left\{ f \in \cH \mid \forall p \in X, f(p) = \langle f, K_p \rangle = 0 \right\} = \{0\},
\end{equation*}
therefore, $\widetilde \cH_K$ is a dense subspace of $\cH$, equivalently, 
$\left\{K_p \mid p \in X\right\}$ is a total set for $\cH$.

The property at item (iii) is known as the \emph{positive semidefiniteness property}. A positive semidefinite
kernel $K$ is called \emph{definite} if $K(p,p)\neq 0$ for all $p\in X$.  Positive semidefiniteness
is in fact sufficient to characterize all reproducing kernels. By the Moore-Aronszajn Theorem,
for any positive semidefinite kernel $K \colon X \times X \to \FF$, there is a unique Hilbert space 
$\cH_K \subseteq \FF^X$ with reproducing kernel $K$.

Let us briefly recall the construction of the Hilbert space $\cH_K$ in the proof. 
We first render $\widetilde \cH_K$ into a pre-Hilbert space 
satisfying the reproducing property. Define on $\widetilde \cH_K$ the inner product
 \begin{equation*}
  \langle \sum_{i=1}^n a_i K_{p_i}, \sum_{j=1}^m b_j K_{q_j} \rangle_{\widetilde \cH_K} := \sum_{i=1}^n 
\sum_{j=1}^m a_i \overline{b_j} K(q_j, p_i)
 \end{equation*}
 for any $\sum_{i=1}^n a_i K_{p_i}, \sum_{j=1}^m b_j K_{q_j} \in \widetilde \cH_K$. It is proven that the 
definition is correct and provides indeed an inner product. 
 
Let $\widehat \cH_K$ be the completion of $\widetilde \cH_K$, then $\widehat \cH_K$ is a Hilbert space 
with an isometric embedding $\phi \colon \widetilde \cH_K \to \widehat \cH_K$ whose image is dense in 
$\widehat \cH_K$. It is proven that this abstract completion can actually be realized in $\FF^X$ and that it
is the RKHS with reproducing kernel $K$ that we denote by $\cH_K$. 
 
In applications, one of the most useful tool is the interplay between reproducing kernels  and orthonormal 
bases of the underlying RKHSs. Although this fact holds in higher generality, 
we state it for separable Hilbert spaces since, most of the time, this is the case of interest:
letting $\cH\subseteq \FF^X$ be a separable 
RKHS, with reproducing kernel $K$, and let $\{\phi_n\}_n$ be an orthonormal basis 
of $\cH$. Then
\begin{equation}\label{e:kerbasis}
K(p,q)=\sum_{n=1}^\infty \phi_n(p) \ol{\phi_n(q)},\quad p,q\in X,
\end{equation}
where the series converges absolutely pointwise.
 
We now recall a useful result on the construction of new RKHSs and 
positive semidefinite kernels from existing ones. It also shows that the concept of reproducing kernel
Hilbert space is actually a special case of the concept of operator range.
 Let $\cH$ be a Hilbert space, $\phi \colon \cH \to \FF^X$ a continuous linear map. Then $\phi(\cH) \subseteq \FF^X$ with the norm 
 \begin{equation}\label{e:image}
  \|f\|_{\phi(\cH)} := \min \left\{\|u\|_\cH \mid u \in \cH, \ f = \phi(u) \right\} \end{equation}
 is a RKHS, unitarily isomorphic to $(\ker \phi)^\perp$. 
 The kernel for $\phi(\cH)$ is then given by the map 
 \begin{equation}\label{e:kersynth}
 (p, q) \mapsto \langle u_q, u_p \rangle = (\textup{ev}_p \circ \phi)(u_q) = \phi(u_q)(p),\end{equation}
  where $u_q \in \cH$ such that $\textup{ev}_q \circ \phi = \langle \cdot, u_q \rangle$ on $\cH$.
Applying this proposition to particular continuous linear maps, one obtains useful results for pullbacks,
restrictions, sums, scaling, and normalizations of kernels.

\subsection{Integration of RKHS-Valued Functions}\label{ss:intrkhs}
In this article we use integrals of Hilbert space-valued functions. We first provide fundamental 
definitions and properties concerning the \emph{Bochner integral}, an extension of the Lebesgue integral for 
Banach space-valued functions, following D.L~Cohn \cite[Appendix E]{cohn}.

Let $(\cE;\|\cdot\|)$ be a (real or complex) 
Banach space and $(X, \Sigma, \mu)$ a finite measure space. On $\cE$ we consider the Borel 
$\sigma$-algebra denoted by $\cB(\cE)$. A map $f\colon X\ra \cE$ 
is called \emph{measurable} if $f^{-1}(S)\in \Sigma$ for all $S\in \cB(\cE)$ and it is called 
\emph{strongly measurable} if it is measurable and its range $f(X)$ is separable. If $\cE$ is a 
separable Banach space then the concepts coincide. Both sets of measurable functions, 
respectively strongly measurable functions, are vector spaces.
It is proven that, a function $f\colon X\to B$ is strongly measurable if and only if 
there exists a sequence of simple
functions $(\phi_n)_n$ such that $\phi_n\xrightarrow[n]{} f$ pointwise on $X$. In addition, in this case, 
the sequence $(\phi_n)_n$ can be chosen such that  $\|\phi_n(x)\|\leq \|f(x)\|$ for
all $x\in X$. 

A function $f \colon X \to \cE$ is \emph{Bochner integrable} if it is strongly measurable and the scalar
function $X\ni x\mapsto \|f(x)\|\in \RR$ is integrable. In this case, the Bochner integral of $f$ is defined by
approximation with simple functions.
Bochner integrable functions share many properties with scalar-valued integrable functions, but not all.
For example, the collection of all Bochner integrable functions make a vector space and, for any Bochner
integrable function $f$ we have
\begin{equation}\label{e:bochnertrieq}
\left\|\int_X f(x)\de\mu(x)\right\| \leq \int_X \|f(x)\|\de\mu(x).
\end{equation}
Also, letting $L^1(X;\mu;\cE)$ denote the collection of all equivalence classes of Bochner integrable functions,
identified $\mu$-almost everywhere, this is a Banach space with norm
\begin{equation*}
\|f\|_1:=\int_X \|f(x)\|\de\mu(x),\quad f\in L^1(X;\mu;\cE).
\end{equation*}
In addition, the Dominated Convergence Theorem holds for the Bochner integral as well, e.g.\ see 
\cite[Theorem~E.6]{cohn}.
 
In this article, we will use the following result, which is a special case of a theorem of E.~Hille, e.g.\ see
\cite[Theorem~III.2.6]{DiestelUhl}. In Hille's Theorem, the linear transformation is supposed to be only closed 
and, consequently, additional assumptions are needed, so we provide a proof for the special case 
of bounded linear operators for the reader's convenience. 

\begin{theorem}\label{t:hille}
Let $\cE$ be a Banach space, $(X, \mu)$ a measure space, and $f \colon X \to \cE$ a Bochner integrable 
function.
If $L \colon \cE \to \cF$ is a continuous linear transformation between Banach spaces, then 
$L \circ f \colon \cE \to \cF$ is Bochner integrable and 
\begin{equation*} \int_X (L\circ f)(x) \de \mu(x) = L \int_X f(x) \de \mu(x). \end{equation*}
\end{theorem}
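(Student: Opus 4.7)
The plan is to first verify that $L\circ f$ is itself Bochner integrable, then establish the identity by checking it on simple functions and passing to a limit using continuity of $L$ together with the dominated convergence theorem for Bochner integrals.

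For the integrability, I would argue as follows. Since $f$ is strongly measurable, it is measurable and its range $f(X)$ is separable. Composition with the continuous map $L$ yields that $L\circ f$ is measurable (as preimages of Borel sets in $\cF$ pull back to Borel sets in $\cE$, then into $\Sigma$), and $L(f(X))$ is the continuous image of a separable set, hence separable; thus $L\circ f$ is strongly measurable. Moreover, the continuity of $L$ gives $\|(L\circ f)(x)\|\leq \|L\|\,\|f(x)\|$ for every $x\in X$, so the scalar function $x\mapsto \|(L\circ f)(x)\|$ is dominated by an integrable function, establishing Bochner integrability of $L\circ f$.

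To prove the integral identity, the key step is the simple-function case. Choose a sequence of simple functions $(\phi_n)_n$ of the form $\phi_n=\sum_{k=1}^{m_n} b_k^{(n)}\chi_{E_k^{(n)}}$ such that $\phi_n\to f$ pointwise on $X$ and $\|\phi_n(x)\|\leq \|f(x)\|$ for all $x\in X$ and $n\in\NN$, as guaranteed by the structure theorem recalled in Subsection~\ref{ss:intrkhs}. For each such simple $\phi_n$, linearity of $L$ gives the equality
\begin{equation*}
L\int_X \phi_n(x)\de\mu(x)=L\sum_{k=1}^{m_n} b_k^{(n)}\mu(E_k^{(n)})=\sum_{k=1}^{m_n}L(b_k^{(n)})\mu(E_k^{(n)})=\int_X (L\circ\phi_n)(x)\de\mu(x),
\end{equation*}
where the last integral is well defined because $L\circ\phi_n=\sum_{k=1}^{m_n}L(b_k^{(n)})\chi_{E_k^{(n)}}$ is itself a simple function and $\mu(E_k^{(n)})<\infty$ whenever $b_k^{(n)}\neq 0$.

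Finally, I would pass to the limit on both sides. On the left, by definition of the Bochner integral, $\int_X \phi_n\de\mu\to \int_X f\de\mu$ in $\cE$; continuity of $L$ then yields $L\int_X\phi_n\de\mu\to L\int_X f\de\mu$ in $\cF$. On the right, $L\circ\phi_n\to L\circ f$ pointwise by continuity of $L$, and $\|(L\circ\phi_n)(x)\|\leq\|L\|\,\|f(x)\|$, so the dominated convergence theorem for Bochner integrals (stated after \eqref{e:bochnertrieq}) gives $\int_X L\circ\phi_n\de\mu\to \int_X L\circ f\de\mu$ in $\cF$. Equating the two limits yields the claimed identity. The only mildly delicate point is verifying strong measurability of $L\circ f$ and matching the dominated convergence hypotheses; both are handled cleanly by the uniform estimate $\|L\circ g\|\leq \|L\|\,\|g\|$, which propagates integrability through $L$ without further assumptions.
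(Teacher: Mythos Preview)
Your proposal is correct and follows essentially the same route as the paper's proof: approximate $f$ by simple functions $\phi_n$ with $\|\phi_n(x)\|\leq\|f(x)\|$, verify the identity on simple functions by linearity of $L$, and pass to the limit using continuity of $L$ on one side and the Bochner dominated convergence theorem on the other. The only cosmetic difference is that the paper establishes strong measurability of $L\circ f$ via the characterization as a pointwise limit of the simple functions $L\circ\phi_n$, whereas you argue it directly from measurability plus separability of the range; both are valid and the remainder of the argument is identical.
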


\begin{proof} Since $f$ is Bochner integrable, there exists a sequence $(\phi_n)_n$ of simple functions that 
converges pointwise to $f$ on $X$ and $\|\phi_n(x)\|\leq \|f(x)\|$ for all $x\in X$ and all $n\in\NN$. Then,
\begin{equation*}
\|L\phi_n(x)-Lf(x)\|=\|L(\phi_n(x)-f(x))\|\leq \|L\| \|\phi_n(x)-f(x)\|\xrightarrow[n]{}0,\quad x\in X,
\end{equation*}
hence the sequence $(L\circ \phi_n)_n$ converges pointwise to $L\circ f$. Also, it is easy to see that 
$L\circ\phi_n$ is a simple function for all $n\in\NN$. These show that $L\circ f$ is strongly measurable.
Since $\|Lf(x)\|\leq \|L\| \|f(x)\|$ for all $x\in X$ and $f$ is Bochner integrable, it follows that
\begin{equation*}
\int_X \|Lf(x)\|\de\mu(x)\leq \|L\| \int_X \|f(x)\|\de\mu(x)<\infty,
\end{equation*}
hence $L\circ f$ is Bochner integrable.

On the other hand,
\begin{equation*}
\|L\phi_n(x)\|\leq \|L\| \|\phi_n(x)\|\leq \|L\| \|f(x)\|,\quad x\in X,\ n\in\NN,
\end{equation*}
hence, by the Dominated Convergence Theorem for the Bochner integral, it follows that
\begin{align*}
\int_X Lf(x)\de\mu(x) & =\lim_{n\ra\infty} \int_X L\phi_n(x)\de\mu(x) =\lim_{n\ra\infty}L\int_X\phi_n(x)\de\mu(x)\\
& = L\lim_n \int_X \phi_n(x)\de\mu(x)=L\int_X f(x)\de\mu(x).\qedhere
\end{align*}
\end{proof}

A direct consequence of this fact is a sufficient condition for when a pointwise integral coincides with the 
Bochner integral, valid not only for RKHSs but also for Banach spaces of functions on which evaluation
maps at 
any point are continuous, e.g.\ $C(Y)$ for some compact Hausdorff space $Y$.

\begin{proposition}\label{p:bochnerkernel}
Let $(X,\Sigma,\mu)$ be a measure space, $\cB \subseteq \FF^X$ a Banach space of functions on $X$ 
such that all evaluation maps on $\cB$ are continuous. 
Let $\lambda \colon X \times X \to \FF$ be such that for each
$q\in X$ we have $\lambda_q:=\lambda(\cdot,q)\in\cB$. 

If, for each $q \in X$, the map $X\ni q \mapsto \lambda_q\in\cB$ is Bochner integrable, 
then the scalar map $X\ni q \mapsto \lambda(p, q)\in\FF$ is integrable, for each fixed $p \in X$.

Moreover, in that case, the pointwise integral map $X\ni p\mapsto\int_X \lambda(p, q) \de \mu(q)$ lies in $\cB$ 
and coincides with the Bochner integral $\int_X \lambda_q \de \mu(q)$.
\end{proposition}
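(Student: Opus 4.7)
The plan is to apply Hille's theorem (Theorem~\ref{t:hille}) to the evaluation maps, which by hypothesis are bounded linear functionals on $\cB$.

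Fix $p \in X$ and consider the evaluation map $\textup{ev}_p \colon \cB \to \FF$. By assumption this is a continuous linear transformation between Banach spaces, so Hille's theorem applies to the composition $\textup{ev}_p \circ (q \mapsto \lambda_q)$. Note that for each $q \in X$,
\begin{equation*}
(\textup{ev}_p \circ (q \mapsto \lambda_q))(q) = \textup{ev}_p(\lambda_q) = \lambda_q(p) = \lambda(p,q),
\end{equation*}
so that Hille's theorem delivers simultaneously that the scalar map $q \mapsto \lambda(p,q)$ is Bochner integrable (which, for $\FF$-valued maps, is exactly Lebesgue integrability) and that
\begin{equation*}
\int_X \lambda(p,q) \de\mu(q) = \textup{ev}_p\!\left(\int_X \lambda_q \de\mu(q)\right).
\end{equation*}
This gives the first claim and already identifies the pointwise integral with the evaluation of the Bochner integral at $p$.

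Since the Bochner integral $F := \int_X \lambda_q \de\mu(q)$ is by definition an element of $\cB$, the right-hand side above equals $F(p)$, an element of $\FF$ depending on $p$. Letting $p$ vary yields that the pointwise integral map
\begin{equation*}
X \ni p \mapsto \int_X \lambda(p,q) \de\mu(q)
\end{equation*}
coincides with $F$ as an element of $\FF^X$; in particular, it lies in $\cB$ and equals the Bochner integral $\int_X \lambda_q \de\mu(q)$.

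The argument is essentially a one-line consequence of Hille's theorem once one notices that each $\textup{ev}_p$ is bounded linear, so there is no real obstacle; the only subtlety to mention is that the continuity of $\textup{ev}_p$ on $\cB$ (not just on an RKHS) is exactly what is needed to invoke Theorem~\ref{t:hille}, which is why the hypothesis is phrased in this broader Banach-space language.
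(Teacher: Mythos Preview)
Your proof is correct and follows essentially the same approach as the paper: both apply Theorem~\ref{t:hille} with $L = \textup{ev}_p$ to the Bochner integrable map $q \mapsto \lambda_q$, identify $\textup{ev}_p(\lambda_q) = \lambda(p,q)$, and then read off the conclusions.
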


\begin{proof} Since, for each $q \in X$, the map 
$X\ni q \mapsto \phi(q):=\lambda(\cdot, q)\in\cB$ is 
Bochner integrable, and taking into account that, for all $p\in X$, 
the linear functional $\textup{ev}_p$ is continuous, by Theorem~\ref{t:hille}
we have
\begin{equation*}\label{e:evep} 
\textup{ev}_p \int_X \phi(q) \de \mu(q) = \int_X \textup{ev}_p \circ \phi(q) \de \mu(q). 
 \end{equation*}
Since $\textup{ev}_p \circ \phi(q)=\lambda(p,q)$ for all $p,q\in X$, this means that 
the scalar map $X\ni q \mapsto \lambda(p, q)\in\FF$ is integrable, for each fixed $p \in X$,
and
\begin{equation*}
\textup{ev}_p \int_X \phi(q) \de \mu(q)= \int_X \lambda(p,q)\de\mu(q),\quad p\in X,
\end{equation*}
hence, the pointwise integral map $X\ni p\mapsto\int_X \lambda(p, q) \de \mu(q)$ lies in $\cB$ 
and coincides with the Bochner integral $\int_X \lambda_q \de \mu(q)$.
\end{proof}

\section{Main Results}\label{s:mr}

Throughout this section we consider a probability measure space $(X;\Sigma;P)$ and
a RKHS 
$(\cH;\langle\cdot,\cdot\rangle)$ in $\FF^X$, with norm denoted by $\|\cdot\|_\cH$, 
such that its reproducing kernel $K$ is measurable. In addition, throughout this section, the reproducing
kernel Hilbert space $\cH$ is supposed to be separable.

\subsection{The Reproducing Kernel Hilbert Space $\cH_P$.} 
On the measurable space $(X;\Sigma)$ we define the measure $P_K$
by 
\begin{equation}\label{e:peka}
\de P_K(x)=K(x,x)\de P(x), \quad x\in X\end{equation} 
that is, $P_K$ is the absolutely 
continuous measure with respect to $P$ such that the function
$X\ni x\mapsto K(x,x)$ is the Radon-Nikodym derivative of $P_K$ with respect to $P$.

With respect to the measure space $(X;\Sigma;P_K)$ we consider the Hilbert space $L^2(X;P_K)$. 
Our approach is based on the following natural bounded linear operator mapping $L^2(X;P_K)$ to $\cH$.

\begin{proposition}\label{p:peka} With notation and assumptions as before, let
$\lambda \colon X \to \FF$ be 
a measurable function such that the 
integral $\int_{X} |\lambda(x)|^2 \de P_K(x)$ is finite. Then the Bochner integral 
\begin{equation*}\int_X \lambda(x) K_x \de P(x)\end{equation*} 
exists in $\cH$.

In addition, the mapping
\begin{equation}\label{e:lepeka} 
L^2(X;P_K)\ni \lambda\mapsto L_{P,K}\lambda:=\int_X \lambda(x)K_x\de P(x)\in \cH,
\end{equation} 
is a nonexpansive, hence bounded, linear operator.
\end{proposition}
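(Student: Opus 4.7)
The plan is to split the proof into three parts: (i) verify that $x \mapsto \lambda(x) K_x$ is strongly measurable as a $\cH$-valued map; (ii) verify Bochner integrability by estimating $\int_X \|\lambda(x) K_x\| \de P(x)$; (iii) deduce linearity and nonexpansiveness from general properties of the Bochner integral combined with the same estimate.

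First, I would show that the map $\Phi_K \colon X \to \cH$, $x \mapsto K_x$, is strongly measurable. Since $K$ is measurable on $X \times X$, each slice $K_y = K(\cdot, y)$ is measurable on $X$, so every finite linear combination of $K_y$'s is measurable. A generic $f \in \cH$ is a norm-limit in $\cH$ of such combinations, and norm-convergence in $\cH$ forces pointwise convergence by the reproducing property (since $|f_n(x)-f(x)| \le \|f_n-f\|\sqrt{K(x,x)}$), so $f$ is a pointwise limit of measurable scalar functions, hence measurable. Consequently, for every $f \in \cH$, the scalar map $x \mapsto \langle K_x, f \rangle = \overline{f(x)}$ is measurable, i.e.\ $\Phi_K$ is weakly measurable. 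Because $\cH$ is separable, Pettis's theorem upgrades weak measurability to strong measurability, and then $x \mapsto \lambda(x) K_x$ is strongly measurable as the product of a scalar measurable function with a strongly measurable $\cH$-valued function.

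Next, for Bochner integrability, I compute $\|\lambda(x) K_x\|_\cH = |\lambda(x)|\sqrt{K(x,x)}$ and apply the Cauchy--Schwarz inequality in $L^2(X;P)$, using the fact that $P$ is a probability measure:
\begin{equation*}
\int_X |\lambda(x)|\sqrt{K(x,x)}\,\de P(x) \le \Bigl(\int_X |\lambda(x)|^2 K(x,x)\,\de P(x)\Bigr)^{1/2}\Bigl(\int_X \de P(x)\Bigr)^{1/2} = \|\lambda\|_{L^2(P_K)}.
\end{equation*}
Since $\lambda \in L^2(X;P_K)$, this is finite, so $x \mapsto \lambda(x) K_x$ is Bochner integrable and the Bochner integral $L_{P,K}\lambda := \int_X \lambda(x) K_x \de P(x)$ exists in $\cH$.

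Linearity of $L_{P,K}$ follows immediately from the linearity of the Bochner integral on the vector space of integrable functions. For nonexpansiveness, I combine the Bochner triangle inequality \eqref{e:bochnertrieq} with the Cauchy--Schwarz estimate above:
\begin{equation*}
\|L_{P,K}\lambda\|_\cH \le \int_X \|\lambda(x) K_x\|_\cH\,\de P(x) = \int_X |\lambda(x)|\sqrt{K(x,x)}\,\de P(x) \le \|\lambda\|_{L^2(P_K)},
\end{equation*}
yielding $\|L_{P,K}\| \le 1$. The only delicate point in this proof is the strong measurability step, since weak measurability of $\Phi_K$ has to be extracted from measurability of the scalar kernel together with the fact that every element of $\cH$ is measurable; everything else reduces to a single application of Cauchy--Schwarz, which is painless because $P$ is a probability measure.
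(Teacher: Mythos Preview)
Your argument is correct and follows essentially the same route as the paper: strong measurability of $x\mapsto\lambda(x)K_x$ via separability of $\cH$, then Cauchy--Schwarz (using that $P$ is a probability measure) to bound $\int_X\|\lambda(x)K_x\|\,\de P(x)$ by $\|\lambda\|_{L^2(P_K)}$, and finally the Bochner triangle inequality for nonexpansiveness. The only notable difference is that the paper asserts measurability of $x\mapsto\lambda(x)K_x$ ``by assumptions'' without elaboration, whereas you actually justify it by showing every $f\in\cH$ is measurable, deducing weak measurability of $\Phi_K$, and invoking Pettis's theorem; your treatment of that step is more complete than the paper's.
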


\begin{proof} By assumptions, the map $X\ni x\mapsto \lambda(x)K_x\in\cH$ is measurable and,
since $\cH$ is separable, it follows that this map is actually strongly measurable.
Letting $\|\cdot\|$ denote the norm on $\cH$ and using the assumption that
$\int_{X} |\lambda(x)|^2 K(x, x) \de P(x)$ is finite, we have
\begin{equation*}  \int_X \|\lambda(x) K_x\|^2_\cH \de P(x) =
\int_{X} |\lambda(x)|^2 K(x, x) \de P(x)  < \infty,
\end{equation*}
hence, by the Schwarz Inequality and taking into account that $P$ is a probability measure, we have
\begin{equation*} 
\int_X \|\lambda(x) K_x\|_\cH \de P(x) \leq \sqrt{\int_X \|\lambda(x) K_x\|^2_\cH \de P(x)} < \infty.
\end{equation*} 
By Theorem~\ref{t:hille} this implies that
the Bochner integral $\int_X \lambda(x) K_x \de P(x)$ exists in $\cH$.
Consequently, the mapping $L_{P,K}$ as in \eqref{e:lepeka} is correctly defined and it is clear that it is a 
linear transformation. 

For arbitrary $\lambda\in L^2(X;P_K)$, by the triangle inequality for the Bochner integral 
\eqref{e:bochnertrieq},
we then have
\begin{align*} \biggl\| \int_X \lambda(x)K_x\de P(x)\biggr\|^2_\cH &
\leq \left( \int_X \|\lambda(x) K_x\| \de P(x) \right)^2 \\ 
& = \biggl( \int_X |\lambda(x)| K(x,x)^{1/2} \de P(x)\biggr)^2 \\
\intertext{and applying the Schwarz Inequality for the integral and taking into account that $P$
is a probability measure}
& \leq \int_X |\lambda(x)|^2 K(x,x)\de P(x)=\|\lambda \|^2_{L^2(X;P_K)},
\end{align*}
hence $L_{P,K}\colon L^2(X;P_K)\ra \cH$ is a nonexpansive linear operator.
\end{proof}

Using the bounded linear operator $L_{P,K}$ defined as in 
\eqref{e:lepeka}, let us denote its range by 
\begin{equation}\label{e:hepe}
\cH_P:=L_{P,K}(L^2(X;P_K)),\end{equation} 
which is a subspace of the RKHS $\cH$.

\begin{proposition}\label{p:hepe} 
$\cH_P$ is a RKHS contained in $\cH$, hence in $\FF^X$, and its reproducing kernel $K_P$ is
\begin{equation*}
K_P(x,y)  = 
\int_X \frac{K(x, z) K(z, y)}{K(z,z)} \de P(z),\quad x,y\in X,
\end{equation*}
where, whenever $K(z,z)=0$,
by convention we define $K(x,z)K(z,y)/K(z,z)=0$ for all $x,y\in X$.
\end{proposition}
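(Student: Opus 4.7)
The plan is to apply Theorem~\ref{thmimagerkhs} to the nonexpansive linear operator $L_{P,K}\colon L^2(X;P_K)\to\cH$ established in Proposition~\ref{p:peka}, viewed as a continuous linear map into $\FF^X$ by post-composing with the canonical inclusion $i_\cH\colon\cH\hookrightarrow\FF^X$ (continuous because $\cH$ is a RKHS). This instantly yields that $\cH_P=L_{P,K}(L^2(X;P_K))$ is a RKHS contained in $\cH$, so the real content is to identify its reproducing kernel $K_P$.

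By Theorem~\ref{thmimagerkhs}, one has $K_P(x,y)=L_{P,K}(u_y)(x)$ where $u_y\in L^2(X;P_K)$ is characterized by $\textup{ev}_y\circ L_{P,K}=\langle\cdot,u_y\rangle_{L^2(X;P_K)}$. To compute $\textup{ev}_y\circ L_{P,K}$, I would apply Proposition~\ref{p:bochnerkernel} to the kernel $\widetilde\lambda(p,q):=\lambda(q)K(p,q)$, for which $\widetilde\lambda_q=\lambda(q)K_q$ is Bochner integrable by Proposition~\ref{p:peka}; this allows the Bochner integral defining $L_{P,K}\lambda$ to be evaluated pointwise at $y$, giving
\begin{equation*}
(L_{P,K}\lambda)(y)=\int_X \lambda(q)K(y,q)\de P(q).
\end{equation*}
Matching this against $\langle\lambda,u_y\rangle_{L^2(X;P_K)}=\int_X\lambda(q)\overline{u_y(q)}K(q,q)\de P(q)$, and using $K(y,q)=\overline{K(q,y)}$, forces the natural candidate
\begin{equation*}
u_y(q):=\begin{cases}K(q,y)/K(q,q), & K(q,q)\neq 0,\\ 0, & K(q,q)=0.\end{cases}
\end{equation*}

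Next, I would verify that $u_y$ belongs to $L^2(X;P_K)$: using the Schwarz inequality~\eqref{e:schwarz} and the fact that $P$ is a probability measure,
\begin{equation*}
\|u_y\|_{L^2(X;P_K)}^{2}=\int_X\frac{|K(q,y)|^{2}}{K(q,q)}\de P(q)\leq \int_X K(y,y)\de P(q)=K(y,y)<\infty,
\end{equation*}
so $u_y\in L^2(X;P_K)$ and indeed represents the bounded functional $\textup{ev}_y\circ L_{P,K}$. A second application of Proposition~\ref{p:bochnerkernel} to $L_{P,K}u_y$ then produces
\begin{equation*}
K_P(x,y)=(L_{P,K}u_y)(x)=\int_X u_y(z)K(x,z)\de P(z)=\int_X\frac{K(x,z)K(z,y)}{K(z,z)}\de P(z),
\end{equation*}
which is precisely the claimed formula.

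The only delicate point is the bookkeeping at the set $X_0=\{z\mid K(z,z)=0\}$: one must reconcile the formal quotient $K(x,z)K(z,y)/K(z,z)$ with the stated convention and check that redefining $u_y$ to vanish on $X_0$ does not alter either the inner product identity or the kernel integral. Both issues are resolved by the Schwarz bound, since $K(x,z)=K(z,y)=0$ for every $z\in X_0$, so the integrand is naturally zero there and the stipulated convention is consistent. I expect this housekeeping, rather than any deeper obstacle, to be the main point requiring care in the proof.
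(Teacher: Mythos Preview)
Your proposal is correct and follows essentially the same route as the paper: both apply Theorem~\ref{thmimagerkhs} to $L_{P,K}$, identify the Riesz representative $u_y(q)=K(q,y)/K(q,q)$ (set to $0$ on $X_0$), check $u_y\in L^2(X;P_K)$ via the Schwarz inequality~\eqref{e:schwarz}, and then read off $K_P$. The only cosmetic difference is that the paper computes $K_P(x,y)=\langle u_y,u_x\rangle_{L^2(X;P_K)}$ directly as an inner product, whereas you use the equivalent expression $K_P(x,y)=(L_{P,K}u_y)(x)$ together with Proposition~\ref{p:bochnerkernel}; both formulas appear in the conclusion of Theorem~\ref{thmimagerkhs} and yield the same integral.
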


\begin{proof}
Since $L^2(X; P_K)$ is a Hilbert space and $L_{P,K}$ is a bounded linear map, 
by \eqref{e:image} it follows that
$\cH_P$ is a RKHS in $\FF^X$,
isometrically isomorphic to the orthogonal complement of $\ker L_{P,K} \subseteq L^2(X; P_K)$,
and its norm is given by
\begin{equation*}
\|g\|_{\cH_P} := \min \left\{ \|\lambda\|_{L^2(X; P_K)} \mid L_{P,K} \lambda = g \right\}, \quad g \in \cH_P.
\end{equation*}

Let 
\begin{equation*}X_0:=\{x\in X\mid K(x,x)=0\},
\end{equation*}
and let us define $u_x\colon X\ra \FF$ by
\begin{equation*}
u_x(y) := \begin{cases} \frac{K(y, x)}{K(y, y)}, & y\in X\setminus X_0,\\
0, & y\in X_0. 
\end{cases}
\end{equation*}
From the Schwarz Inequality for the kernel $K$, it follows that if $x\in X_0$ then $K(x,y)=0$ for all
$y\in X$. This shows that $u_x=0$ for all $x\in X_0$.

For each $x\in X$, by the Schwarz inequality and the fact that $P$ is a 
probability measure we have
\begin{align*}
\int_X |u_x(y)|^2 K(y,y) \de P(y) & = \int_{X\setminus X_0} \frac{|K(y,x)|^2}{K(y,y)} \de P(y) \\
& \leq \int_{X\setminus X_0} \frac{K(y,y)K(x,x)}{K(y,y)} \de P(y)\\
& = K(x,x)P(X\setminus X_0) < \infty,
\end{align*}
hence, $u_x  \in L^2(X, P_K)$. Then, taking into account that $K(x,y)=0$ for all $y\in X_0$ and all $x\in X$,
it follows that, for each $\lambda \in L^2(X, P_K)$ and $x \in X$, we have
\begin{align*}
(L_{P,K} \lambda)(x) & = \int_X \lambda(y) K(x, y) \de P(y)  
 = \int_{X\setminus X_0} \lambda(y) K(x, y) \de P(y) \\
& = \int_{X\setminus X_0} \lambda(y) \overline{\frac{K(y,x)}{K(y,y)}} K(y,y) \de P(y) \\
& = \int_{X} \lambda(y) \overline{u_x(y)} K(y,y) \de P(y)
= \langle \lambda, u_x \rangle_{L^2(X, P_K)}.
\end{align*}
In conclusion, $u_x$ is exactly the representative for the functional $\textup{ev}_x L_{P,K}$ so, 
by \eqref{e:image} the kernel of $\cH_P$ is
\begin{align*}
K_P(x,y)  & = \langle u_y, u_x \rangle_{L^2(X, P_K)} \\
&  = \int_X u_y(z) \overline{u_x(z)} K(z, z) \de P(z) 
= \int_{X\setminus X_0} u_y(z) \overline{u_x(z)} K(z, z) \de P(z) \\
\intertext{and, using the convention that
$K(x,z)K(z,y)/K(z,z)=0$  whenever $K(z,z)=0$ and for arbitrary $x,y\in X$,}
& = \int_X \frac{K(x, z) K(z, y)}{K(z,z)} \de P(z).\qedhere 
\end{align*}
\end{proof}

One of the main results of this article, see Theorem~\ref{thmaeconv}, 
assumes that the space $\cH_P$ 
is dense in $\cH$. The next proposition provides sufficient conditions for this.

\begin{proposition}\label{p:hepedense}
Let $X$ be a topological space, $P$ a Borel probability measure on $X$, $\cH \subseteq \FF^X$ a 
RKHS with measurable kernel $K$, and let $P_K$, $L_{P,K}$ and $\cH_P$ defined as in
\eqref{e:peka}, \eqref{e:lepeka}, and \eqref{e:hepe}, respectively. 

Suppose that $K$ is continuous on $X$, that $\cH \subseteq L^2(X; P_K)$, and that $P$ 
is strictly positive on any nonempty open subset of $X$. Then $\cH_P$ is dense in $\cH$.
\end{proposition}

\begin{proof} The assertion is clearly equivalent with showing that the orthogonal complement of $\cH_P$
in $\cH$ is the null space. To this end,
let $f \in \cH$, $f \perp \cH_P$. That is, for each $\lambda \in L^2(X; P_K)$, we have
\begin{equation*}
\langle f, L_{P,K} \lambda \rangle_\cH = \langle f, \int_X \lambda(x) K_x \de P(x) \rangle = 0.
\end{equation*}
Then noting the fact that $\int \lambda(x) K_x \de P(x)$ is a Bochner integral and hence, 
by Theorem~\ref{t:hille}, it commutes with inner products,
\begin{equation*}
0 = \langle f, \int_X \lambda(x) K_x \de P(x) \rangle = \int_X \ol{\lambda(x)} \langle f, K_x \rangle \de P(x) = \int \ol{\lambda(x)} f(x) \de P(x).
\end{equation*}
By assumption, $f \in \cH \subseteq L^2(X; P_K)$, so we can take $\lambda = f$ to obtain
\begin{equation*}
\int |f(x)|^2 \de P(x) = \int_X \ol{f(x)} f(x) \de P(x) = 0.
\end{equation*}
This implies that $f = 0$ $P$-almost everywhere, i.e.\ the set $f^{-1}(\FF\setminus\{0\})$ has zero $P$ 
measure. 

Since $K$ is continuous by assumption, by the Theorem~2.3 in \cite[Section 2.1.3]{saitoh},
each $f \in \cH$ is continuous hence $f^{-1}(\FF\setminus\{0\})$ is an open subset of $X$. 
But, since $P$ is assumed strictly positive on any nonempty open set, it follows that
$f^{-1}(\FF\setminus\{0\})$ must be empty, hence $f = 0$ identically.
\end{proof}

\subsection{Probability Error Bounds of Approximation}
The first step in our enterprise is to find error bounds for approximations of functions in the reproducing 
kernel Hilbert space $\cH$ in terms of distributional finite linear combinations of functions of type
$K_x$. To do that,
we use the celebrated Markov-Bienaym\'e-Chebyshev Inequality on the 
concentration of probability 
measures to obtain regions of large measure with small approximation error, in terms of the Hilbert space 
norm and not simply the uniform norm.

\begin{theorem}[Markov-Bienaym\'e-Chebyshev's Inequality]\label{c:BCI}
Let $(X;\Sigma;P)$ be a probability space, $(\cB;\|\cdot\|)$ a Banach space, and let
$f,g \colon X \to \cB$ be two Borel measurable functions. Then,
for any $\delta>0$, we have
\begin{equation}\label{e:BCI}
P(\left\{ x \in X \mid \|g(x)\| \geq \delta \right\}) 
\leq \frac{1}{\delta^2} \int_X \|g(x)\|^2 \de P(x).
\end{equation} 
\end{theorem}

The classical Bienaym\'e-Chebyshev Inequality
\begin{equation*}
P(\{x\in X\mid |f(x)-E(f)|\geq k\sigma\})\leq \frac{1}{k^2},
\end{equation*}
is obtained from \eqref{e:BCI} applied for $\cB=\RR$, $g(x)=f(x)-E(f)$, and $\delta=k\sigma$, for 
$k>0$, where $E(f)=\int_X f(x)\de x$ is the expected value 
of the random variable $f$ and
$\sigma^2=E((f-E(f))^2)=E(f^2)-E(f)^2>0$ is the variance of $f$.

\begin{theorem} \label{thmbound}
With notation and assumptions as before, let $\lambda\in L^2(X;P_K)$ and
$f \in \cH$. For each $n \in \NN$ and $\delta>0$, consider the set
\begin{equation}\label{e:aned}
A_{n,\delta}:=\bigl\{(x_1, \ldots, x_n) \in X^n\mid \bigl\|f - \frac{1}{n} 
\sum_{i=1}^n \lambda(x_i) K_{x_i}\bigr\|_\cH\geq \delta\bigr\}.
\end{equation} 
Then, letting $P^n$ denote the product probability measure on $X^n$ and defining the bounded
linear operator $L_{P,K}$ as in \eqref{e:lepeka}, we have
\begin{equation*}
P^n(A_{n,\delta}) \leq \frac{1}{\delta^2} \left\|f - L_{P,K}\lambda \right\|^2_\cH + \frac{1}{n\delta^2} 
\biggl(\|\lambda\|_{L^2(X;P_K)}^2 - \| L_{P,K}\lambda\|^2_\cH\biggr).
\end{equation*}
\end{theorem}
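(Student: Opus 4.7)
The plan is to apply the Markov--Bienaym\'e--Chebyshev Inequality (Corollary~\ref{c:BCI}) to the Banach-space-valued map
\begin{equation*}
g_n \colon X^n \to \cH, \qquad g_n(x_1,\ldots,x_n) := f - \frac{1}{n} \sum_{i=1}^n \lambda(x_i) K_{x_i},
\end{equation*}
equipped with the product probability measure $P^n$ on $X^n$. The measurability of $g_n$ into $\cH$ follows, by separability of $\cH$, from the assumed measurability of $K$ and $\lambda$, together with the fact that $X \ni x \mapsto K_x \in \cH$ is strongly measurable (as already used in Proposition~\ref{p:peka}). Inequality~\eqref{e:BCI} then reduces the problem to producing the right upper bound for $\int_{X^n} \|g_n\|^2_\cH \,\de P^n$.

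The main computation is to expand the square of the norm using the inner product and the reproducing property:
\begin{equation*}
\|g_n\|_\cH^2 = \|f\|^2 - \frac{2}{n} \operatorname{Re} \sum_{i=1}^n \overline{\lambda(x_i)} f(x_i) + \frac{1}{n^2} \sum_{i,j=1}^n \lambda(x_i) \overline{\lambda(x_j)} K(x_j, x_i).
\end{equation*}
I would integrate term-by-term against $P^n$ using Fubini--Tonelli, whose application is justified by $\lambda \in L^2(X;P_K)$ together with the Schwarz-type bound $|K(x_j,x_i)| \le K(x_i,x_i)^{1/2} K(x_j,x_j)^{1/2}$, which ensures all iterated integrals are absolutely convergent.

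The three integrals that arise are identified as follows. First, applying Theorem~\ref{t:hille} to the bounded linear functional $\langle \cdot, f \rangle_\cH$ and the Bochner integrand $x \mapsto \lambda(x) K_x$, one obtains $\int_X \overline{\lambda(x)} f(x) \,\de P(x) = \langle L_{P,K}\lambda, f \rangle_\cH$, so the cross term yields $-2\operatorname{Re}\langle f, L_{P,K}\lambda \rangle_\cH$. Second, for the $n(n-1)$ off-diagonal pairs $i \ne j$, independence (Fubini) and a double application of Theorem~\ref{t:hille} with the functional $\langle \cdot, L_{P,K}\lambda\rangle_\cH$ give
\begin{equation*}
\int_{X^2} \lambda(x_i) \overline{\lambda(x_j)} K(x_j,x_i) \,\de P^2 = \Bigl\langle \int_X \lambda(x_i) K_{x_i}\de P(x_i), \int_X \lambda(x_j) K_{x_j}\de P(x_j)\Bigr\rangle_\cH = \|L_{P,K}\lambda\|^2_\cH.
\end{equation*}
Third, for the $n$ diagonal terms, $\int_X |\lambda(x)|^2 K(x,x) \,\de P(x) = \|\lambda\|^2_{L^2(X;P_K)}$ by Definition~\ref{d:peka}. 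Combining these pieces gives
\begin{equation*}
\int_{X^n} \|g_n\|^2 \de P^n = \|f\|^2 - 2\operatorname{Re}\langle f, L_{P,K}\lambda\rangle_\cH + \frac{n-1}{n}\|L_{P,K}\lambda\|^2_\cH + \frac{1}{n}\|\lambda\|^2_{L^2(X;P_K)},
\end{equation*}
and a direct regrouping rewrites the right-hand side as $\|f - L_{P,K}\lambda\|^2_\cH + \tfrac{1}{n}(\|\lambda\|^2_{L^2(X;P_K)} - \|L_{P,K}\lambda\|^2_\cH)$. Dividing by $\delta^2$ yields the claimed bound.

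The genuinely delicate step is the identification of the off-diagonal expectation with $\|L_{P,K}\lambda\|^2_\cH$: one must pass a Hilbert space inner product through two Bochner integrals, which is precisely the setting that Theorem~\ref{t:hille} handles (applied once with the functional $y \mapsto \langle y, L_{P,K}\lambda\rangle_\cH$, after first computing the inner integral). Everything else is a bookkeeping expansion; the scalar-valued Fubini reduction is essentially what Proposition~\ref{p:bochnerkernel} legitimizes in this setting.
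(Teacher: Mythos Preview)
Your proposal is correct and follows essentially the same approach as the paper: define $g_n$, apply the Markov--Bienaym\'e--Chebyshev inequality to reduce to computing $\int_{X^n}\|g_n\|^2\de P^n$, expand the squared norm, integrate term by term via Fubini and Theorem~\ref{t:hille}, and regroup. The only cosmetic difference is that you invoke the reproducing property to write the cross and off-diagonal terms as scalar integrals of $\overline{\lambda(x_i)}f(x_i)$ and $\lambda(x_i)\overline{\lambda(x_j)}K(x_j,x_i)$, whereas the paper keeps them in inner-product form $\langle f,\lambda(x_i)K_{x_i}\rangle$ and $\langle \lambda(x_i)K_{x_i},\lambda(x_j)K_{x_j}\rangle$ before applying Hille's theorem; the content is identical.
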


\begin{proof} By Proposition~\ref{p:peka}, the Bochner integral
$\int_X \lambda(x) K_x \de P(x)$ exists  in $\cH$ and 
the linear operator $L_{P,K}$ is well-defined and bounded.
In order to simplify the notation,
considering $g\colon X^n\ra\cH$ the function defined by
\begin{equation*}
g(x_1,\ldots,x_n)=f-\frac{1}{n} 
\sum_{i=1}^n \lambda(x_i) K_{x_i},\quad (x_1,\ldots,x_n)\in X^n,
\end{equation*}
observe that $g$ is measurable and 
for each $\delta > 0$ we have 
\begin{equation}\label{e:anede}A_{n,\delta} = \left\{(x_1, \ldots, x_n) \in X^n \mid \|g(x_1, \ldots, x_n)\|_\cH 
\geq \delta \right\}.\end{equation}
Then we have
\begin{align}
\|g(x_1,\ldots,x_n)\|^2  & = \bigl\|f - \frac{1}{n} \sum_{i=1}^n \lambda(x_i) K_{x_i} \bigr\|^2_\cH \nonumber \\
& = \|f\|^2 - \frac{2}{n} \sum_{i=1}^n \Re \langle f, \lambda(x_i) K_{x_i} \rangle \label{e:eftwo} 
+ \frac{1}{n^2} \sum_{i=1}^n \sum_{j=1}^n \langle \lambda(x_i) K_{x_i}, \lambda(x_j) K_{x_j} \rangle.
\end{align}

Since $P^n$ is a probability measure we have
\begin{equation*}
\int_{X^n} \|f\|^2_\cH \de P^n(x_1, \ldots, x_n) = \|f\|^2_\cH. \end{equation*}
On the other hand,  by Fubini's theorem 
and the fact that the Bochner integral commutes with continuous linear operations, see 
Theorem~\ref{t:hille}, we have
\begin{align*}
\int_{X^n} \Re \langle f, \lambda(x_i) K_{x_i} \rangle \de P^n(x_1, \ldots, x_n) 
  & = \Re \langle f, \int_{X^n} \lambda(x_i) K_{x_i} \de P^n(x_1, \ldots, x_n) \rangle \\
  & = \Re \langle f, \int_{X} \lambda(x) K_{x} \de P(x) \rangle = \Re \langle f, L_{P,K}\lambda \rangle. \end{align*}
Also, for each $i=1,\ldots,n$,
\begin{align*}
\int_{X^n} \langle \lambda(x_i) K_{x_i}, \lambda(x_i) K_{x_i} \rangle \de P^n(x_1, \ldots, x_n) 
  & = \int_{X^n} |\lambda(x_i)|^2 K(x_i, x_i) \de P^n(x_1, \ldots, x_n) \\
  & = \int_{X} |\lambda(x)|^2 K(x, x) \de P(x), \\
  \intertext{and, for each $i, j = 1, \ldots, n$, $i \neq j$,}
\int_{X^n} \langle \lambda(x_i) K_{x_i}, \lambda(x_j) K_{x_j} \rangle \de P^n(x_1, \ldots, x_n) 
  & = \int_{X} \langle \lambda(x_i) K_{x_i}, \int_{X} \lambda(x_j) K_{x_j} \de P(x_j) \rangle \de P(x_i) \\
  & = \langle \int_{X} \lambda(x) K_{x} \de P(x), \int_{X} \lambda(x) K_{x} \de P(x) \rangle \\
  &  = \| \int_{X} \lambda(x) K_{x} \de P(x)\|^2_\cH.
\end{align*}
Integrating both sides of \eqref{e:eftwo} and using all the previous equalities, we therefore have
\begin{align*}
\int_{X^n}  & \|g(x_1, \ldots, x_n)\|^2_\cH   \de P^n(x_1, \ldots, x_n) 
 = \|f\|^2_\cH - \frac{2}{n} \sum_{i=1}^n \Re \langle f, \int_{X} \lambda(x) K_{x} \de P(x) \rangle \\
 & \ \ \ \ \  + \frac{1}{n^2} \sum_{i=1}^n 
\sum_{i\neq j=1}^n \| \int_{X} \lambda(x) K_{x} \de P(x)\|^2_\cH 
 + \frac{1}{n^2} \sum_{i=1}^n \int_{X} |\lambda(x)|^2 K(x, x) \de P(x) \\
& = \|f\|^2_\cH - 2 \Re \langle f, \int_{X} \lambda(x) K_{x} \de P(x) \rangle 
+ \frac{n-1}{n} \| \int_{X} \lambda(x) K_{x} \de P(x)\|^2_\cH \\ 
& \phantom{\|f\|^2 - 2 \Re \langle f, \int_{X} \lambda(x) K_{x} \de P(x) \rangle} 
+ \frac{1}{n} \int_{X} |\lambda(x)|^2 K(x, x) \de P(x) \\
& = \left\|f \! -\! \int_{X}\! \lambda(x) K_{x} \de P(x)
 \right\|^2_\cH 
 + \frac{1}{n} \biggl( \int_{X} \! |\lambda(x)|^2 K(x, x) \de P(x) - \| \int_{X}\! \lambda(x) K_{x} \de P(x)
\|^2_\cH \biggr)\\
& = \left\|f  - L_{P,K}\lambda
 \right\|^2_\cH 
 + \frac{1}{n} \biggl( \int_{X}  |\lambda(x)|^2 K(x, x) \de P(x) - \| L_{P,K}\lambda\|^2_\cH \biggr).
\end{align*}
Finally, in view of the Markov-Bienaym\'e-Chebyshev Inequality as
in \eqref{e:BCI}, when $X$ is replaced by $X^n$ and $P$ by $P^n$, and  
taking into account the previous equality and \eqref{e:anede}, we get
\begin{align*} 
P^n(A_{n,\delta}) & \leq \frac{1}{\delta^2} \int_{X^n} \|g(x_1, \ldots, x_n)\|^2_\cH \de P^n(x_1, \ldots, x_n) \\
& = \frac{1}{\delta^2} \left\|f - L_{P,K}\lambda \right\|^2_\cH 
+ \frac{1}{n\delta^2} \biggl(\|\lambda\|_{L^2(X;P_K)}^{2} - \| L_{P,K}\lambda \|^2_\cH\biggr),
\end{align*} 
which is the required inequality.
\end{proof}

\subsection{Convergence in Probability}\label{ss:cp}
As with the special case of kernel embeddings, for which $\lambda = 1$, see Smola et al. \cite{smola},
we may use the bound in Theorem~\ref{thmbound} to obtain a statement of convergence in probability.

With notation and assumptions as before, given $f \in \cH$ and fixed $(x_i)_{i=1}^N \in X$, the 
problem of finding the optimal $(\omega^N_i(f))_{i=1}^N \in \FF^N$ to minimize $\|f - \sum_{i=1}^N \omega^N_i(f) 
K_{x_i}\|_\cH$ is straightforward: $\sum_{i=1}^N \omega^N_i(f) K_{x_i}$ is the orthogonal projection of $f$ to 
$\textup{span}\{K_{x_i}\}_{i=1}^N$. 

We may assume without loss of generality that $\{K_{x_i}\}_{i=1}^N$ are linearly independent, by removing 
points as necessary without affecting $\textup{span}\{K_{x_i}\}_{i=1}^N$ (or losing any information about $f$, 
since $\sum_{i=1}^N c_i K_{x_i} = 0$ implies $\sum_{i=1}^N \overline{c_i} f(x_i) = 0$ by the reproducing 
property).  According to H.~K\"orezlio\u glu \cite{korezli}, if $(x_i)_{i=1}^N \in X$ is a  sampling
such that $\{K_{x_i}\}_{i=1}^N$ are linearly independent and considering the
finite-dimensional subspace $\cH^N_x := \textup{span}\{K_{x_i}\}_{i=1}^N$ of $\cH$, then the orthogonal 
projection $\pi^N_x$ of $\cH$ onto $\cH^N_x$ is given by
\begin{equation*}
\pi^N_x(f) = \sum_{i=1}^N \omega^\pi_i(f) K_{x_i} := \sum_{i=1}^N \sum_{j=1}^N f(x_j) \Gamma^N_{ji} K_{x_i} 
= \sum_{i=1}^N \sum_{j=1}^N \langle f, K_{x_j} \rangle \Gamma^N_{ji} K_{x_i}
\end{equation*}
for any $f \in \cH$, where $\Gamma^N \in \cM_N(\FF)$ is the inverse of the \emph{Gram matrix} 
$G^N := \left[ K(x_j, x_i) \right]_{i,j=1}^N = \left[ \langle K_{x_i}, K_{x_j} \rangle  \right]_{i,j=1}^N$ of 
$\{x_1, \ldots, x_N\}$.

More generally, if $\{K_{x_i}\}_{i=1}^N$ are not linearly independent, for any subset $s = (x_{i_j})_{j=1}^K$ 
such that $\{K_{x_{i_j}}\}_{j=1}^K$ form a basis for $\cH^N_x$, we have $\cH^N_x = \cH^K_s$ and
\begin{equation*}\pi^N_x = \pi^K_s = \sum_{j=1}^K \sum_{k=1}^K \langle \cdot, K_{x_{i_k}} \rangle 
\Gamma^K_{kj} K_{x_{i_j}}.\end{equation*}
Note that, in general, $\omega^\pi_i$ is not simply a multiple of $f(x_i)$ 
hence, setting $\omega_i := V_i f(x_i)$ for any fixed $V_i$ will not yield the best possible 
approximation. However, with such coefficients dependent only on $x_i$, it will be easier
to bound $\|f - \sum_i \omega_i K_{x_i}\|$ across different $(x_i)_i$s than $\|f - \pi^N_x f\|$. Then any upper 
bound on $\|f - \sum_i \omega_i K_{x_i}\|$ for some fixed $(\omega_i)_i$ will also be an upper bound on 
$\|f - \pi^N_x f\|$.

\begin{theorem}[Convergence in Probability of Projections] \label{thmprobconv}
Let $X$, $P$, $K$, and $\cH$  be as in Theorem~\ref{thmbound}.  For each sequence 
$x=(x_i)_i\in X^\NN$ and each $n\in\NN$, 
let $\pi^n_x$ denote the orthogonal projection of $\cH$ onto 
$\Span\{K_{x_i}\}_{i=1}^n$.
Let $f \in \cH$ and, for each $\delta > 0$ and $n \in \NN$,
define
\begin{equation*}
B_{n, \delta} := \left\{(x_1, \ldots, x_n) \in X^n \mid  \left\| f - \pi^n_x f \right\|_\cH \geq \delta \right\}.
\end{equation*}
Then, for each $\delta>0$
\begin{equation*}
\limsup_{n \to \infty} P^n(B_{n, \delta}) \leq \frac{1}{\delta^2} d_\cH(f, \cH_P)^2,
\end{equation*}
where $d_\cH(f, \cH_P) = \inf_{g \in \cH_P} \|f-g\|$. 

In particular, if $f$ belongs to $\overline{\cH_P}^\cH$, the closure of  $\cH_P$ with respect to the topology of 
$\cH$, then
\begin{equation*}
\lim_{n \to \infty} P^n(B_{n, \delta}) = 0.
\end{equation*}
\end{theorem}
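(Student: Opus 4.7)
The plan is to reduce this probability statement to Theorem~\ref{thmbound} via the optimality of the orthogonal projection. Since $\pi^n_x f$ is, by definition, the closest point in $\mathrm{span}\{K_{x_i}\}_{i=1}^n$ to $f$, for any $\lambda \in L^2(X;P_K)$ and any $(x_1,\ldots,x_n) \in X^n$ the vector $\frac{1}{n}\sum_{i=1}^n \lambda(x_i)K_{x_i}$ lies in this span, so
\begin{equation*}
\|f - \pi^n_x f\| \leq \bigl\|f - \tfrac{1}{n}\textstyle\sum_{i=1}^n \lambda(x_i)K_{x_i}\bigr\|.
\end{equation*}
Consequently $B_{n,\delta}\subseteq A_{n,\delta}$, where $A_{n,\delta}$ is the set defined in \eqref{e:aned}. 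The first step I would carry out is to record this inclusion carefully, noting that $B_{n,\delta}$ is $P^n$-measurable because $(x_1,\ldots,x_n)\mapsto \|f-\pi^n_x f\|^2$ is a measurable function of the Gram matrix entries $K(x_i,x_j)$ and the evaluations $f(x_i)$, via Proposition~\ref{propproj}.

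Next, I would apply Theorem~\ref{thmbound} to obtain, for any fixed $\lambda \in L^2(X;P_K)$,
\begin{equation*}
P^n(B_{n,\delta}) \leq P^n(A_{n,\delta}) \leq \frac{1}{\delta^2}\|f - L_{P,K}\lambda\|^2 + \frac{1}{n\delta^2}\bigl(\|\lambda\|_{L^2(X;P_K)}^2 - \|L_{P,K}\lambda\|^2\bigr).
\end{equation*}
Taking $\limsup_{n\to\infty}$ makes the second term vanish (the bracket is a constant depending only on $\lambda$, not on $n$), leaving
\begin{equation*}
\limsup_{n\to\infty} P^n(B_{n,\delta}) \leq \frac{1}{\delta^2}\|f - L_{P,K}\lambda\|^2.
\end{equation*}

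Now I would take the infimum of the right-hand side over $\lambda \in L^2(X;P_K)$. By definition \eqref{e:hepe}, $L_{P,K}(L^2(X;P_K)) = \cH_P$, so
\begin{equation*}
\inf_{\lambda \in L^2(X;P_K)}\|f - L_{P,K}\lambda\|^2 = \inf_{g \in \cH_P}\|f - g\|^2 = d_\cH(f,\cH_P)^2,
\end{equation*}
yielding the first assertion. For the second, observe that the distance from $f$ to $\cH_P$ equals the distance to $\overline{\cH_P}^\cH$, so if $f \in \overline{\cH_P}^\cH$ then $d_\cH(f,\cH_P)=0$ and the limit is forced to be $0$.

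I do not expect a substantive obstacle: the proof is essentially a corollary of Theorem~\ref{thmbound} combined with the variational characterisation of orthogonal projections. The only subtleties are the measurability of $B_{n,\delta}$ (routine given separability of $\cH$ and the explicit formula of Proposition~\ref{propproj}, taking care of the case when $\{K_{x_i}\}_{i=1}^n$ are linearly dependent by passing to a maximal linearly independent subfamily) and the legitimacy of taking the infimum over $\lambda$ after the $\limsup$, which is valid because the inequality holds for each $\lambda$ separately.
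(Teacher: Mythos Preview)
Your proposal is correct and follows essentially the same route as the paper: use the optimality of the orthogonal projection to get $B_{n,\delta}\subseteq A_{n,\delta}$, apply Theorem~\ref{thmbound}, take the $\limsup$ to kill the $1/n$ term, and then infimise over $\lambda$ to obtain $d_\cH(f,\cH_P)^2/\delta^2$. The only addition in your write-up is the explicit remark on measurability of $B_{n,\delta}$, which the paper leaves implicit.
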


\begin{proof}
Let $\lambda \in L^2(X, P_K)$ and fix $\delta>0$, arbitrary. 
Then 
\begin{equation}\label{e:fepin}
\|f - \pi^n_x f \|_\cH \leq \left\|f - \frac{1}{n} \sum_{i=1}^n \lambda(x_i) K_{x_i} \right\|_\cH,
\end{equation} hence, with 
notation as in \eqref{e:aned}, we have
$B_{n,\delta}\subseteq A_{n,\delta}$.  By Theorem \ref{thmbound}, this implies
\begin{equation*}
P^n(B_{n, \delta}) \leq \frac{1}{\delta^2} \left\|f - L_{P,K} \lambda \right\|^2_\cH + \frac{1}{n\delta^2} \left[ \|\lambda\|_{L^2(X, P_K)}^2 - \|L_{P,K} \lambda\|^2_\cH\right].
\end{equation*}
Therefore,
\begin{align*}
\limsup_{n \to \infty} P^n(B_{n, \delta}) 
& \leq \limsup_{n \to \infty} \left[ \frac{1}{\delta^2} \left\|f - L_{P,K} \lambda \right\|_\cH^2 + \frac{1}{n\delta^2} \left( \|\lambda\|_{L^2(X, P_K)}^2 - \|L_{P,K} \lambda\|_\cH^2 \right) \right] \\
& = \frac{1}{\delta^2} \left\|f - L_{P,K} \lambda \right\|_\cH^2.
\end{align*}
Thus, since the left-hand side is independent of $\lambda$,
\begin{equation*}
\limsup_{n \to \infty} P^n(B_{n, \delta}) \leq 
\inf_{\lambda \in L^2(X, P_K)} \frac{1}{\delta^2} \left\|f - L_{P,K} \lambda 
\right\|_\cH^2 = \frac{1}{\delta^2} d_\cH(f, \cH_P)^2.
\end{equation*}
In particular, if
$f$ belongs to $\overline{\cH_P}^\cH$, then $d_\cH(f, \cH_P) = 0$. 
\end{proof}

\subsection{Uniqueness Sets and Almost Certain Convergence of Projections.}\label{ss:usaccp}

With notation and assumptions as before, we now follow \cite[Subsection~2.4.4]{saitoh} in 
recalling the strong convergence of $\pi^N_x$ to the identity map as 
$N \to \infty$ for appropriately chosen $(x_i)_{i=1}^\infty$.
Since $\cH$ is separable, there exists a countable subset of $\{K_p\}_{p \in X}$ which is total in $\cH$; thus, 
there exists a countable set $F \subseteq X$ such that $\textup{span}\{K_x\}_{x \in F}$ is dense in $\cH$.
This motivates the following definition:
a countable subset $\{x_i\}_{i=1}^\infty$ of $X$ is called a \emph{uniqueness set} for $\cH$ if 
$\{K_{x_i}\}_{i=1}^\infty$ is a total set in $\cH$, that is, if $f \in \cH$ such that
$f(x_i) = 0$ for all $i \in \NN$ implies $f = 0$.
Then, the so-called Ultimate Realization of RKHSs, cf.\ \cite[Theorem 2.33]{saitoh}, reads as follows:
if $\{x_i\}_{i=1}^\infty$ is a uniqueness set such that $\{K_{x_i}\}_{i=1}^\infty$ is linearly independent, 
$G^N$ is the Gram matrix for $\{x_i\}_{i=1}^N$, $\Gamma^N = (G^N)^{-1}$, then for each $f \in \cH$,
\begin{equation*}
\lim_{N \to \infty} \pi^N_x f = \lim_{N \to \infty} \sum_{i=1}^N \sum_{j=1}^N f(x_i) \Gamma^N_{ij} K_{x_j} = f
\end{equation*}
under the topology of $\cH$, with distance decreasing monotonically. Consequently,
\begin{equation*}
\langle f, g \rangle = \lim_{N \to \infty} \sum_{i=1}^N \sum_{j=1}^N f(x_i) \Gamma^N_{ij} \overline{g(x_j)}
\end{equation*}
for $f, g \in \cH$, and
\begin{equation*}
f(x) = \langle f, K_x \rangle = \lim_{N \to \infty} \sum_{i=1}^N \sum_{j=1}^N f(x_i) \Gamma^N_{ij} K(x, x_j)
\end{equation*}
for $f \in \cH, x \in X$.
This has implications in interpolation theory, e.g.\ see \cite[Corollary~2.6]{saitoh}.

Coming back to our problem, 
by noting that $\|f - \pi^n_x f \|$, unlike $\left\|f - \frac{1}{n} \sum_{i=1}^n \lambda(x_i) K_{x_i} \right\|$, is 
monotonically nonincreasing with respect to $n$, our next goal is to
strengthen Theorem~\ref{thmprobconv} to almost certain convergence after passing to a single 
measure space. Firstly, recall that, 
e.g.\ see \cite[Proposition 10.6.1]{cohn}, 
the countably infinite product space $X^\NN$ equipped with the smallest 
$\sigma$-algebra rendering each projection map $X_i \colon X^\NN \to X$ measurable admits a unique 
probability measure $P^\NN$ such that the projection maps are independent random variables with distribution 
$P$.

\begin{lemma}\label{l:penn}
Let $X$, $P$, $K$, and $\cH$ be as in Theorem \ref{thmbound} and $f \in \cH$. For each $\delta > 0$ define
\begin{equation*}
S_{n, \delta} := \left\{ x = (x_k)_{k=1}^\infty \in X^\NN \mid \|f - \pi^n_x f \|_\cH \geq \delta \right\}, \quad n\in\NN,
\end{equation*}
and 
\begin{equation}\label{e:sedelta}
S_{\delta} := \left\{ x = (x_k)_{k=1}^\infty \in X^\NN \mid \forall N \in \NN, \exists n \geq N, \|f - \pi^n_x f \|_\cH 
\geq \delta \right\} = \bigcap_{N \in \NN} \bigcup_{n \geq N} S_{n, \delta}.
\end{equation}
Then,
\begin{equation*}
P^\NN(S_{\delta}) \leq \frac{1}{\delta^2} d_\cH(f, \cH_P)^2,
\end{equation*}
and, consequently, if $f \in \overline{\cH_P}^\cH$, then
\begin{equation*}
P^\NN(S_{\delta}) = 0.
\end{equation*}
\end{lemma}

\begin{proof}
Observe that for each $n, m \in \NN$ such that $n > m$, $\|f - \pi^n_x f\|_\cH \leq \|f - \pi^m_x f\|_\cH$, for each 
$x \in X^\NN$, and hence $S_{n, \delta} \subseteq S_{m, \delta}$ for each $\delta > 0$. Then,
\begin{equation*}
S_{\delta} = \bigcap_{N \in \NN} \bigcup_{n \geq N} S_{n, \delta} = \bigcap_{N \in \NN} S_{N, \delta},
\end{equation*}
hence, for any $\lambda \in L^2(X, P_K)$,
\begin{equation*}
P^\NN(S_\delta) \leq \inf_{N \in \NN} P^\NN(S_{N, \delta}) \leq \frac{1}{\delta^2} \|f - L_{P,K} \lambda \|_\cH^2,
\end{equation*}
since $P^\NN$ is monotone and $S_\delta \subseteq S_{N, \delta}$ for all $N \in \NN$.
\end{proof}

The main result of this subsection is the following

\begin{theorem}[Almost Certain Convergence of Projections] \label{thmaeconv}
Let $X, P, K, \cH$ be as in Theorem~\ref{thmbound} and 
suppose $\cH_P$ is dense in $\cH$. Then, for each $f \in \cH$,
\begin{equation*}
P^\NN\left(\left\{ x \in X^\NN \mid \pi^n_x f \xrightarrow[n]{} f \right\}\right) = 1,
\end{equation*}
hence,
\begin{equation*}
P^\NN\left(\left\{ x \in X^\NN \mid \forall f \in \cH, \pi^n_x f \xrightarrow[n]{} f \right\}\right) = 1.
\end{equation*}
\end{theorem}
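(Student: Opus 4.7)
The plan is to reduce the claim to the previous lemma by a standard countable-union argument, and then to bootstrap from a countable dense subset to all of $\cH$ by exploiting the fact that each $\pi^n_x$ is a nonexpansive operator.

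First I would fix $f\in\cH$ and observe that the set of sequences along which $\pi^n_x f$ fails to converge to $f$ in the norm of $\cH$ is precisely
\begin{equation*}
N_f:=\bigl\{x\in X^\NN\mid \pi^n_xf\not\xrightarrow[n]{}f\bigr\}=\bigcup_{k\in\NN}S_{1/k},
\end{equation*}
where $S_\delta$ is defined as in \eqref{e:sedelta}. Under the density assumption $\overline{\cH_P}^\cH=\cH$, we have $d_\cH(f,\cH_P)=0$, so the preceding lemma gives $P^\NN(S_{1/k})=0$ for every $k\in\NN$. By countable subadditivity of $P^\NN$, we conclude $P^\NN(N_f)=0$, which is exactly the first displayed equation.

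For the simultaneous statement, I would invoke separability of $\cH$ to fix a countable dense subset $\{f_j\}_{j\in\NN}\subseteq\cH$. Applying the first part to each $f_j$ and taking the union of the corresponding null sets yields a single set $N:=\bigcup_{j\in\NN}N_{f_j}$ with $P^\NN(N)=0$, such that for every $x\in X^\NN\setminus N$ we have $\pi^n_xf_j\xrightarrow[n]{}f_j$ for all $j\in\NN$ simultaneously. It then remains to upgrade this to all $f\in\cH$, which I would do by the standard $\varepsilon/3$ argument: given $f\in\cH$ and $\varepsilon>0$, pick $f_j$ with $\|f-f_j\|<\varepsilon/3$, and then use that the orthogonal projection $\pi^n_x$ is nonexpansive, i.e.\ $\|\pi^n_x f-\pi^n_xf_j\|\leq \|f-f_j\|$, to estimate
\begin{equation*}
\|f-\pi^n_xf\|\leq \|f-f_j\|+\|f_j-\pi^n_xf_j\|+\|\pi^n_xf_j-\pi^n_xf\|<\tfrac{2\varepsilon}{3}+\|f_j-\pi^n_xf_j\|,
\end{equation*}
which is smaller than $\varepsilon$ for $n$ large enough on the set $X^\NN\setminus N$.

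No step here looks genuinely hard: the probabilistic work has already been done in the preceding lemma, and the only conceptual ingredient beyond it is that separability and nonexpansiveness of $\pi^n_x$ allow one to replace a pointwise-in-$f$ statement by a uniform-in-$f$ statement on a common set of full measure. The mildest subtlety to keep an eye on is measurability of the sets $N_f$ and $N$, but this follows from the lemma (each $S_\delta$ is an intersection of countable unions of measurable sets in the product $\sigma$-algebra) and from the countability of the dense subset $\{f_j\}_j$.
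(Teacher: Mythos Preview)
Your proposal is correct and follows essentially the same route as the paper: the first part is identical (writing the non-convergence set as a countable union of the $S_{1/k}$ and invoking the lemma), and for the second part the paper likewise passes to a countable dense subset $\cD\subseteq\cH$ and uses that each $\pi^n_x$ has operator norm $1$ to conclude that $\pi^n_xf\to f$ for all $f\in\cH$ iff it holds for all $f\in\cD$. The only difference is that the paper states this last equivalence without detail, whereas you spell out the underlying $\varepsilon/3$ estimate explicitly.
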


\begin{proof}
Let $f \in \cH$. With the same sets $S_\delta$ defined in \eqref{e:sedelta},
\begin{align*}
\left\{ x \in X^\NN \mid \pi^n_x f \not\to f \right\} &
= \left\{ x \in X^\NN \mid \exists \delta > 0, \forall N \in \NN, \exists n \geq N, \|f - \pi^n_x f \|_\cH 
\geq \delta \right\}
\\ & = \bigcup_{\delta > 0} S_\delta.
\end{align*}
Observe further that $S_\delta \subseteq S_{\delta^\prime}$ whenever $\delta > \delta^\prime$, 
and for each $\delta > 0$ there exists $m \in \NN$ such that $\delta > 1/m$, so that
\begin{equation*}
\left\{ x \in X^\NN \mid \pi^n_x f \not\xrightarrow[n]{} f \right\} 
= \bigcup_{0 < \delta \leq 1} S_\delta = \bigcup_{m \in \NN} S_{1/m}
\end{equation*}
thus, taking into account that $\cH_P$ is dense in $\cH$ and using Lemma~\ref{l:penn}, we get
\begin{equation*}
P^\NN\left(\left\{ x \in X^\NN \mid \pi^n_x f \not\xrightarrow[n]{} f \right\}\right) \leq \sum_{m \in \NN} P^
\NN(S_{1/m}) = \sum_{m \in \NN} 0 = 0.
\end{equation*}

Since $\cH$ is separable let $\cD$ be a countable dense subset of $\cH$. Since each $\pi^n_x$ is 
a continuous linear operator with operator norm $1$, $\pi^n_x f \to f$ for all $f \in \cH$ iff $\pi^n_x f \to f$ for all 
$f \in \cD$. Thus by the countable subadditivity of $P^\NN$,
\begin{align*}
P^\NN\left(\left\{ x \in X^\NN \mid \exists f \in \cH, \pi^n_x f \not\xrightarrow[n]{} f \right\}\right) 
& = P^\NN\left(\left\{ x \in X^\NN \mid \exists f \in \cD, \pi^n_x f \not\xrightarrow[n]{} f \right\}\right) \\
& = P^\NN\left(\bigcup_{f \in \cD} \left\{ x \in X^\NN \mid \pi^n_x f \not\xrightarrow[n]{} f \right\}\right) \\
& \leq \sum_{f \in \cD} P^\NN\left(\left\{ x \in X^\NN \mid \pi^n_x f \not\xrightarrow[n]{} f \right\}\right)=0. \qedhere
\end{align*}
\end{proof}

In summary, for a given probability measure $P$ under the assumption that it 
renders the space $\cH_P$, the image of $L_{P,K}$, dense in $\cH$, a 
sequence of points sampled independently from $P$ yields a uniqueness set with probability $1$. 
Proposition shows a sufficient condition, valid for many applications, 
when this assumption holds.

\section{Examples}\label{s:ex}

In this final section we provide detailed examples of applicability of the results on approximation error bounds
obtained in the previous section.

\subsection{Uniform distribution on a compact interval}\label{ss:udoci}
Let $(\mu_j)_{j\in\ZZ} \in l_1(\ZZ)$ be such that $\mu_j > 0$ for all $j \in \ZZ$ and denote 
$\mu:=\sum_{j\in \ZZ}\mu_j$. 
For each $j \in \ZZ$ define 
\begin{equation*}\phi_j\colon[-\pi,\pi]\ra \CC,\quad
\phi_j (t) := \emath^{\iac  \pi j t}, \quad t\in[-\pi,\pi],
\end{equation*} and consider the Hilbert space
\begin{equation*}
\cH = \left\{ \sum_{j\in\ZZ} c_j \phi_j \mid \sum_{j\in\ZZ} \frac{|c_j|^2}{\mu_j} < \infty \right\},
\end{equation*}
with the inner product
\begin{equation*}
\langle \sum_{j\in\ZZ} c_j \phi_j, \sum_{j\in\ZZ} d_j \phi_j \rangle = \sum_{j\in\ZZ} \frac{c_j \overline{d_j}}{\mu_j}.
\end{equation*}
Then $\{\sqrt{\mu_j}\phi_j\}_{j\in\ZZ}$ is an orthonormal basis of $\cH$ and, for an arbitrary function $f\in\cH$,
we have the Fourier representation
\begin{equation}\label{e:fot}
f(t)=\sum_{j\in\ZZ} c_j \phi_j(t),\quad t\in [-\pi,\pi],
\end{equation}
with coefficients $\{c_j\}_{j\in\ZZ}$ subject to the condition 
\begin{equation}\label{e:fehad}
\|f\|_\cH^2:=\sum_{j\in\ZZ} \frac{|c_j|^2}{\mu_j} < \infty,\end{equation} 
where the convergence of the series from 
\eqref{e:fot} is at least guaranteed with respect to the norm $\|\cdot\|_\cH$. 
However, for any $m\in\NN_0$ and $t\in[-\pi,\pi]$, by the Cauchy inequality we have
\begin{equation*}
\sum_{|j|\geq m} |c_j \phi_j(t)|\leq \bigl(\sum_{|j|\geq m} \frac{|c_j|^2}{\mu_j}\bigr)^{1/2} \bigl( \sum_{|j|\geq m}
\mu_j\bigr)^{1/2}\xrightarrow[m\ra\infty]{} 0,
\end{equation*}
hence the convergence in \eqref{e:fot} is absolutely and uniformly on $[-\pi,\pi]$, in particular $f$ is 
continuous.

By \eqref{e:kerbasis} $\cH$ has the reproducing kernel 
\begin{equation}\label{e:kasete}
K(s,t)=\sum_{j\in\ZZ} \mu_j \emath^{\iac\pi j(s-t)} = \sum_{j\in\ZZ} \mu_j \phi_j(s) \overline{\phi_j(t)},
\end{equation}
and the convergence of the series is guaranteed at least pointwise. In addition, for any $t\in [-\pi,\pi]$
we have
\begin{equation*}
K(t,t)=\sum_{j\in\ZZ} \mu_j |\phi_j(t)|^2=\sum_{j\in\ZZ}\mu_j=\mu,
\end{equation*}
and hence the kernel $K$ is bounded. In particular, this implies that, actually, the series in 
\eqref{e:kasete} converges absolutely and uniformly on $[-\pi,\pi]$, hence the kernel $K$ is continuous
on $[-\pi,\pi]\times [-\pi,\pi]$.
That is, $K(s,t)$ is given by $\kappa(s-t)$ where $\kappa \colon \RR \to \CC$ is a continuous function with 
period $2\pi$ whose Fourier coefficients $(\mu_j)_{j\in\ZZ}$ are all positive and absolutely summable.

Let $P$ be the normalized Lebesgue measure on $[-\pi,\pi]$, equivalently, the
uniform probability distribution 
on $[-\pi,\pi]$, and observe that $\{\phi_j\}_{j\in\ZZ}$
is an orthonormal basis of the Hilbert space $L_P[-\pi,\pi]$. With notation as in \eqref{e:peka}, we
have $\de P_K(t)=K(t,t)\de P(t)=\mu\de P(t)$ hence $L^2_{P_K}[-\pi,\pi]=L^2_P[-\pi,\pi]$ with
norms differing by multiplication with $\mu>0$. In particular, $\{\phi_j/\sqrt\mu\}_{j\in\ZZ}$ is an orthonormal basis
of the Hilbert space $L^2_{P_K}[-\pi,\pi]$.

We consider now the nonexpansive 
operator $L_{P,K}\colon L^2_{P_K}[-\pi,\pi]\ra \cH$ defined as in \eqref{e:lepeka}.
Then, for any $j\in\ZZ$ and $t \in [-\pi,\pi]$, we have
\begin{align*}
(L_{P,K} \phi_j)(t) 
& = \int_{-\pi}^\pi \phi_j(s) K(t, s) \de P(s) = \int_{-\pi}^\pi \phi_j(s) \left( \sum_{k\in\ZZ} \mu_k \phi_k(t) 
\overline{\phi_k(s)} \right) \de P(s) \\
& = \sum_{k\in\ZZ} \mu_k \phi_k(t) \int_{-\pi}^\pi \phi_j(s) \overline{\phi_k(s)} \de P(s) = \sum_{k\in\ZZ} \mu_k 
\phi_k(t) \delta_{jk} = \mu_j \phi_j(t),
\end{align*}
where, the series commutes with the integral either by the Bounded Convergence Theorem for the Lebesgue
integral, or by using the uniform convergence of the series and the Riemann integral.
Similarly, the Hilbert space $\cH_P:=L_{P,K}(L^2_{P_K}[-\pi,\pi])$, as in Proposition~\ref{p:hepe},
is a RKHS, with kernel,
\begin{align*}
K_P(s,t) & = \frac{1}{2\pi} \int_{-\pi}^\pi \frac{\left(\sum_{j\in\ZZ} \mu_j \phi_j(s) \overline{\phi_j(z)}\right)\left(\sum_{l\in\ZZ} \mu_l \phi_l(z) \overline{\phi_l(t)}\right)}{\sum_{j\in\ZZ} \mu_j} \de z \\
& = \frac{1}{\mu} \sum_{j\in\ZZ} \sum_{l\in\ZZ} \mu_j \mu_l \phi_j(s) \overline{\phi_l(t)} \frac{1}{2\pi} \int_{-\pi}^\pi \phi_j(z) \overline{\phi_l(z)} \de z \\
& = \frac{1}{\mu} \sum_{j\in\ZZ} \sum_{l\in\ZZ} \mu_j \mu_l \phi_j(s) \overline{\phi_l(t)} \delta_{jl} 
= \sum_{j\in\ZZ} \frac{\mu_j^2}{\mu} \phi_j(s) \overline{\phi_j(t)}.
\end{align*}
Thus, letting $\mu^\prime_j := \frac{\mu_j^2}{\mu} \leq \mu_j$, $j\in\ZZ$ and noting that 
$\sum_{j\in\ZZ} \mu^\prime_j \leq \sum_{j\in\ZZ} \mu_j < \infty$, we have
\begin{equation*}
\cH_P = \left\{ \sum_{j\in\ZZ} c_j \phi_j \mid \sum_{j\in\ZZ} \frac{|c_j|^2}{\mu^\prime_j} < \infty \right\} 
= \left\{ \sum_{j\in\ZZ} c_j \phi_j \mid \sum_{j\in\ZZ} \frac{|c_j|^2}{\mu_j^2} < \infty \right\}.
\end{equation*}
In particular, $\cH_P$ is dense in $\cH$ since both contain $\textup{span}\{\phi_j\}_{j\in\ZZ}$ as dense
subsets, but this follows from the more general statement in Proposition~\ref{p:hepedense} as well.

Let now $\lambda\in L^2_{P_K}[-\pi,\pi]=L^2_P[-\pi,\pi]$ be arbitrary, hence
\begin{equation*}
\lambda =\sum_{j\in\ZZ}\lambda_j \phi_j,\quad \sum_{j\in\ZZ}|\lambda_j|^2<\infty,\quad 
\|\lambda\|^2_{L^2_{P_K}[-\pi,\pi]}=\frac{1}{\mu}\sum_{j\in\ZZ}|\lambda_j|^2.
\end{equation*}
Then,
\begin{equation*}
(L_{P,K}\lambda)(t)=\bigl(L_{P,K}\sum_{j\in\ZZ}\lambda_j\phi_j\bigr)(t)
=\sum_{j\in\ZZ}\lambda_j\mu_j \phi_j(t),\quad t\in[-\pi,\pi],
\end{equation*}
and, consequently,
\begin{equation*}
\|L_{P,K}\lambda\|^2_\cH=\sum_{j\in\ZZ}\frac{|\lambda_j|^2\mu_j^2}{\mu_j}=\sum_{j\in\ZZ} \mu_j |\lambda_j|^2.
\end{equation*}
Also, for arbitrary $f\in \cH$ as in \eqref{e:fot} and \eqref{e:fehad}, we have
\begin{equation*}
\|f-L_{P,K}\lambda\|_\cH^2 = \bigl\| \sum_{j\in\ZZ}(c_j-\lambda_j\mu_j)\phi_j\bigr\|^2_\cH=\sum_{j\in\ZZ}
\frac{|c_j-\lambda_j \mu_j|^2}{\mu_j}.
\end{equation*}

Let $(x_n)_{n\in\NN}$ be a sequence of points in $[-\pi,\pi]$. By Theorem~\ref{thmbound} and
taking into account of the inequality \eqref{e:fepin},
for any $N\in\NN$ and $\delta>0$ we have
\begin{align} \label{e:penebi}
P^N\bigl( \|f-\pi^N_x f\|_\cH\geq \delta\bigr) & \leq
P^N\bigl( \|f-\frac{1}{N} \sum_{n=1}^N \lambda(x_n)K_{x_n}\|_\cH\geq \delta\bigr) \\
& \leq \frac{1}{\delta^2}\sum_{j\in\ZZ} \frac{|c_j-\lambda_j\mu_j|^2}{\mu_j} +
\frac{1}{N\delta^2} \biggl(\sum_{j\in\ZZ}(\mu-\mu_j)|\lambda_j|^2\biggr). \nonumber
\end{align} 
On the other hand, we observe that in the inequality \eqref{e:penebi} the left hand side does not depend
on $\lambda$ and hence, for any $\epsilon>0$ there exists $\lambda\in L^2_{P_K}[-\pi,\pi]$ such that 
\begin{equation*}
P^N\bigl( \|f-\pi^N_x f\|_\cH\geq \delta\bigr) < \frac{\epsilon}{2} +
\frac{1}{N\delta^2} \biggl(\sum_{j\in\ZZ}(\mu-\mu_j)|\lambda_j|^2\biggr),
\end{equation*}
and then, for sufficiently large $N$ we get
\begin{equation*}
P^N\bigl( \|f-\pi^N_x f\|_\cH\geq \delta\bigr) < \epsilon.
\end{equation*}

In particular, if $f\in\cH_P$, that is, the inequality \eqref{e:fehad} is replaced by the stronger one
\begin{equation*}
\sum_{j\in\ZZ} \frac{|c_j|^2}{\mu_j^2} < \infty,
\end{equation*}
we can choose $\lambda_j=c_j/\mu_j$, $j\in\ZZ$, and 
we have $\lambda\in L^2_{P_K}[-\pi,\pi]$,
hence
\begin{align*}
P^N\bigl( \|f-\pi^N_x f\|_\cH\geq \delta\bigr) & \leq 
\frac{1}{N\delta^2} \biggl(\sum_{j\in\ZZ}\frac{(\mu-\mu_j)|c_j|^2}{\mu_j^2}\biggr).
\end{align*}
For example, this is the case for $f=\phi_k$ for some $k\in\ZZ$, hence $c_j=\delta_{j,k}$, $j\in\ZZ$, and letting
$\lambda=\phi_k/\mu_k$, hence $\lambda_j=\delta_{j,k}/\mu_j$, $j\in\ZZ$, we have $f=L_{P,K}\lambda$ and
hence,
\begin{equation*}
P^N(\|\phi_k-\pi_x^N\phi_k\|\geq \delta)\leq \frac{1}{N\delta^2\mu_k^2}\sum_{\ZZ\ni j\neq k} \mu_j.
\end{equation*}
This shows that, the larger $\mu_k$ is, the faster $\phi_k$ will be approximated but, since 
$\mu_j \xrightarrow[j]{} 0$, $\phi_j$s cannot be approximated uniformly, in the sense that there does 
not exist a single $N$ to make each $\left\|\phi_j - \pi^N_x \phi_j \right\|_\cH$ bounded by the same 
$\delta$ with the same probability $\eta$. 

This analysis can be applied more generally to kernels that admit an expansion analogous to \eqref{e:kasete} 
under basis functions $(\phi_j)_j$ which constitute a total orthonormal set in $L^2(X; P_K)$, e.g.\ as 
guaranteed by Mercer's Theorem \cite[Theorem 2.30]{saitoh}.

\subsection{The Hardy space $H^2(\DD)$}\label{ss:hardy} 
We consider the open unit disc in the complex plane 
$\DD=\{z\in\CC\mid |z|<1\}$ and the Szeg\"o kernel
\begin{equation}\label{e:hardykernel}
K(z,\zeta)=\frac{1}{1-z\ol\zeta}=\sum_{n=0}^\infty z^n\ol{\zeta}^n,\quad z,\zeta\in\DD,
\end{equation}
where the series converges absolutely and uniformly on any compact subset of $\DD$.
The RKHS associated to $K$ is the Hardy space $H^2(\DD)$ of all functions
$f\colon \DD\ra \CC$ that are holomorphic in $\DD$ with power series expansion
\begin{equation}\label{e:fez}
f(z)=\sum_{n=0}^\infty f_n z^n,
\end{equation}
such that the coefficients sequence $(f_n)_n$ is in $\ell^2_\CC(\NN_0)$. 
The inner product in $H^2(\DD)$ is
\begin{equation*}
\langle \sum_{n=0}^\infty f_nz^n,\sum_{n=0}^\infty g_nz^n\rangle=\sum_{n=0}^\infty f_n\ol{g_n},
\end{equation*}
with norm
\begin{equation*}
\|\sum_{n=0}^\infty f_nz^n\|^2=\sum_{n=0}^\infty |f_n|^2.
\end{equation*}
For each $\zeta\in\DD$ we have
\begin{equation*}
\|K_\zeta\|=\bigl(\sum_{n=0}^\infty |\zeta|^{2n}\bigr)^{1/2}=\frac{1}{\sqrt{1-|\zeta|^2}},
\end{equation*}
hence the kernel $K$ is unbounded.

We consider $P$ the normalized Lebesgue measure on $\DD$, that is, for $z=x+\iac y=r\emath^{\iac\theta}$
we have
\begin{equation*}
\de P(z)=\frac{1}{\pi} \de A(x,y)=\frac{r}{\pi}\de\theta\de r,
\end{equation*}
hence,
\begin{equation*}
\de P_K(z)=\frac{r}{\pi(1-r^2)}\de\theta\de r.
\end{equation*}
Then, $L^2(\DD;P_K)$ is contractively  embedded in $L^2(\DD;P)$.

Further on, in view of Proposition~\ref{p:hepe} and \eqref{e:hardykernel}, for any $z,\zeta\in\DD$ we have
\begin{align}\label{e:kapez}
K_P(z,\zeta) & =\frac{1}{\pi}\int_0^1\int_0^{2\pi} 
\frac{r(1-r^2)}{(1-zr\emath^{-\iac\theta})(1-\ol{\zeta}r\emath^{\iac\theta})}\de\theta\de r \\
& =  \frac{1}{\pi}\int_0^1\int_0^{2\pi} \sum_{n=0}^\infty\sum_{k=0}^\infty (1-r^2)r^{n+k+1}
\emath^{\iac(n-k)\theta}
z^n\ol{\zeta}^k \de\theta\de r \nonumber \\ 
\intertext{which, by using twice the Bounded Convergence Theorem for the Lebesgue measure, equals}
& = \sum_{n=0}^\infty\sum_{k=0}^\infty \frac{1}{\pi}\int_0^1\int_0^{2\pi} (1-r^2)r^{n+k+1}\emath^{\iac(n-k)\theta} 
\de\theta\de r z^n\ol{\zeta}^k \nonumber \\
& = \sum_{n=0}^\infty 4\int_0^1(1-r^2)r^{2n+1}\de r  z^n\ol{\zeta}^n \nonumber \\
& = \sum_{n=0}^\infty \frac{z^n\ol{\zeta}^n}{(n+1)(n+2)}. \nonumber
\end{align}
This shows that the RKHS $H^2_P(\DD)$ induced by $K_P$ 
consists of all functions $h$ that are holomorphic
in $\DD$ with power series representation $h(z)=\sum_{n=0}^\infty h_n z^n$ and such that
\begin{equation*}
\sum_{n=0}^\infty (n+1)(n+2)|h_n|^2<\infty.
\end{equation*}
In particular, an orthonormal basis of $H^2_P(\DD)$ is $\{z^n/\sqrt{(n+1)(n+2)}\}_{n\geq 0}$ and hence 
$H^2_P(\DD)$ is dense in the Hardy space $H^2(\DD)$.

In order to calculate the operator $L_{P,K}\colon L^2(\DD;P_K)\ra H^2(\DD)$, let $\lambda\in L^2(\DD;P_K)$ 
be arbitrary, that is, $\lambda$ is a complex valued measurable function on $\DD$ such that
\begin{equation}\label{e:nolam}
\|\lambda\|_{L^2(\DD;P_K)}^2=
\frac{1}{\pi}\int_0^1\int_0^{2\pi} \frac{|\lambda(r\emath^{\iac\theta})|^2 r}{1-r^2}\de\theta\de r<\infty.
\end{equation}
Then, in view of Proposition~\ref{p:bochnerkernel}, we have
\begin{align}\label{e:lepekal}
(L_{P,K}\lambda)(z) & = \frac{1}{\pi}\int_0^1\int_0^{2\pi} \lambda(r\emath^{\iac\theta}) K(z,r\emath^{\iac\theta})
r\de\theta\de r \\
& = \frac{1}{\pi}\int_0^1\int_0^{2\pi} \lambda(r\emath^{\iac\theta}) \sum_{n=0}^\infty z^n r^{n+1}
\emath^{-\iac n\theta}\de\theta\de r \nonumber \\
\intertext{which, by the Bounded Convergence Theorem, equals}
& = \sum_{n=0}^\infty \frac{1}{\pi}\int_0^1\int_0^{2\pi} \lambda(r\emath^{\iac\theta})r^{n+1}
\emath^{-\iac n \theta} \de\theta\de r z^n = \sum_{n=0}^\infty \lambda_n z^n,\nonumber
\end{align}
where, for each integer $n\geq 0$ we denote
\begin{equation}\label{e:lan}
\lambda_n= \frac{1}{\pi}\int_0^1\int_0^{2\pi} \lambda(r\emath^{\iac\theta})r^{n+1}
\emath^{-\iac n \theta} \de\theta\de r.
\end{equation}
Observing that, letting $\phi_n(z):=\sqrt{n+1}z^n$, for all integer $n\geq 0$ and $z\in\DD$, the set
$\{\phi_n\}_{n\geq 0}$ is orthonormal in $L^2(\DD;P)$, it follows that 
$\lambda_n=\langle \lambda,\phi_n\rangle_{L^2(\DD;P)}$ for all integer $n\geq 0$ and, hence, 
$(\lambda_n)_{n\geq 0}$ is the weighted
sequence of Fourier coefficients of $\lambda$ with respect to the system of orthonormal functions 
$\{\phi_n\}_{n\geq 0}$ in $L^2(\DD;P)$. On the other hand, since $L^2(\DD;P_K)$ is contractively 
embedded in $L^2(\DD;P)$, this shows that
$L_{P,K}$ is the restriction to $L^2(\DD;P_K)$ of
a Bergman type weighted projection of $L^2(\DD;P)$ onto a subspace of the Hardy space $H^2(\DD)$, 
that happens to be exactly $H^2_P(\DD)$.

Finally, let $f\in H^2(\DD)$ with power series representation as in \eqref{e:fez}
and let $\lambda\in L^2(\DD;P_K)$ with norm given as in
\eqref{e:nolam}. Then, by Theorem~\ref{thmbound} and
taking into account of the inequality \eqref{e:fepin}, for any $N\in\NN$ and $\delta>0$ we have
\begin{align}\label{e:penepi}
P^N\bigl( \|f-\pi_\mathbf{z}^N f\|_{H^2(\DD)}\geq \delta\bigr) & \leq
P^N\bigl(\|f-\frac{1}{N}\sum_{i=1}^N \lambda(z_i)K_{z_i}\|_{H^2(\DD)}\geq \delta\bigr) \\
& \leq \frac{1}{\delta^2} \sum_{n=0}^\infty |f_n-\lambda_n|^2 +\frac{1}{N\delta^2}\bigl( 
\|\lambda\|_{L^2(\DD;P_K)}^2-\sum_{n=0}^\infty |\lambda_n|^2\bigr),\nonumber
\end{align}
where $\mathbf{z}=(z_i)_{i\in\NN}$ denotes 
an arbitrary sequence of points in $\DD$ and $\pi^N_\mathbf{z}$ denotes the projection of $H^2(\DD)$ onto
$\Span\{K_{z_i}\mid i=1,\ldots,N\}$.
By exploiting the fact that the left hand side in \eqref{e:penepi} does not depend on $\lambda$ and
the density of $H^2_P(\DD)$ in $H^2(\DD)$, for any 
$\varepsilon>0$ there exists $\lambda\in L^2(\DD;P_K)$ such that
\begin{equation*}
P^N \left( \|f-\pi_\mathbf{z}^N f\|_{H^2(\DD)}\geq \delta \right) \leq \frac{\varepsilon}{2} +\frac{1}{N\delta^2}\bigl( 
\|\lambda\|_{L^2(\DD;P_K)}^2-\sum_{n=0}^\infty |\lambda_n|^2\bigr),
\end{equation*}
and hence, for $N$ sufficiently large, we have
\begin{equation*}
P^N\left( \|f-\pi_\mathbf{z}^N f\|_{H^2(\DD)}\geq \delta\right) \leq \varepsilon.
\end{equation*}

Let us consider now the special case when the function $f\in H^2_P(\DD)$, that is, 
with respect to the representation as in \eqref{e:fez},  we have the stronger condition
\begin{equation*}
\sum_{n=0}^\infty (n+1)(n+2)|f_n|^2<\infty.
\end{equation*}
In this case, letting
\begin{equation*}
\lambda(z):=\sum_{n=0}^\infty (n+1)(n+2)(1-|z|^2)f_n z^n,\quad z\in\DD,
\end{equation*}
calculations similar to \eqref{e:kapez} and \eqref{e:lepekal} show that
\begin{equation*}
\frac{1}{\pi} \int_0^1 \int_0^{2\pi} \frac{|\lambda(r\emath^{\iac\theta})|^2r}{1-r^2}\de\theta\de r
= \sum_{n=0}^\infty (n+1)(n+2)|f_n|^2<\infty,
\end{equation*}
hence $\lambda\in L^2(\DD;P_K)$, and
\begin{equation*}
(L_{P,K}\lambda)(z)=\frac{1}{\pi}\int_0^\infty \int_0^{2\pi} \lambda(r\emath^{\iac\theta})K(z,r\emath^{\iac\theta})r
\de\theta\de r = \sum_{n=0}^\infty f_n z^n=f(z),\quad z\in \DD,
\end{equation*}
hence, the first term in the right hand side of \eqref{e:penepi} vanishes and we get
\begin{equation*}
P^N\left( \|f-\pi_\mathbf{z}^N f\|_{H^2(\DD)}\geq\delta\right) 
\leq \frac{1}{N\delta^2} \sum_{n=0}^\infty (n^2+3n+1)|f_n|^2.
\end{equation*}
For example, if $f(z)=z^n$ for some integer $n\geq 0$, then
\begin{equation*}
P^N\left( \|f-\pi_\mathbf{z}^N f\|_{H^2(\DD)}\geq \delta\right) \leq \frac{n^2+3n+1}{N\delta^2},
\end{equation*}
showing that better approximations are obtained for smaller $n$ than for bigger $n$.

\section{Conclusions}

Certain key properties of Hilbert spaces drive the analysis that has been obtained in this article, as well as the 
properties of reproducing kernel Hilbert spaces that render them attractive for function approximation. 
The Hilbert space structure provides orthogonal projections as the unique best approximation, which can 
be computed using the reproducing property as an exact interpolation, and are shown to converge 
monotonically to the function for uniqueness sets. The monotonicity of 
convergence is then used to derive almost certain convergence directly from convergence in probability, 
and thus establish sufficient conditions for almost every sequence of samples 
from a probability distribution to be a uniqueness set. For the approximation bound itself, stated in 
Theorem \ref{thmbound}, the mean squared distance in Chebyshev's inequality can be calculated explicitly 
thanks to the norm being induced by an inner product and the existence of the Bochner integral. 

We did not include in this article an example with the Gaussian kernel, one of the most useful kernels in 
applications, although calculations similar to those obtained in Section~\ref{s:ex} are available. 
One of the reasons for this omission
is that the Gaussian kernels have additional invariance and differentiability/analyticity properties that can be 
used in order to provide stronger results by using slightly different techniques that are in progress and will 
make the contents of a future research.
\bigskip

The authors of this article declare no conflict of interests.

\end{document}